\documentclass[final,leqno]{siamltex1213}
\pdfoutput=1 
\usepackage{amsmath,amssymb,blkarray,enumerate,mathtools,multirow,tabularx,url,xspace,array,xcolor}


\newcommand\bc{\begin{center}}
\newcommand\ec{\end{center}}
\newcommand\TW{\textwidth}
\newtheorem{examp}{{\it Example}}[section]
\newenvironment{example}{\begin{examp}\rm}{\qquad\endproof\end{examp}}

\makeatletter
\def\rf#1{(\@rf#1,.)}
\def\@rf#1,{\ref{eq:#1}\@ifnextchar . {\@endrf}{, \@rf}}
\def\@endrf.{}
\makeatother

\newcommand\coref[1]{Corollary~\ref{co:#1}\xspace}
\newcommand\exref[1]{Example~\ref{ex:#1}\xspace}
\newcommand\fgref[1]{Figure~\ref{fg:#1}\xspace}
\newcommand\lmref[1]{Lemma~\ref{lm:#1}\xspace}
\newcommand\scref[1]{Section~\ref{sc:#1}\xspace}

\newcommand\ssrf[1]{\S\ref{ss:#1}}

\newcommand\thref[1]{Theorem~\ref{th:#1}\xspace}

\newcommand\mc[3]{\multicolumn{#1}{#2}{#3}}

\renewcommand\~{\widetilde} 

\renewcommand\`{^{-1}} 
\renewcommand\:{\,{:}\,} 

\newcommand\ds{\displaystyle}
\newcommand\eqntxt[1]{\quad \text{#1} \quad}
\newcommand\ninf{{-\!\infty}}


\newcommand\Z{\mathbb{Z}}
\newcommand\B{\mathcal{B}}

\newcommand\embE{\mathcal{E}}
\newcommand\x{\times}
\newcommand\FBG{\textup{FBG}}
\newcommand\sbemb{\textup{emb}}
\newcommand\Sdiag{S_\textup{diag}}
\newcommand\rclbl[1]{\fbox{\scriptsize$#1$}}

\renewcommand{\d}{\mathrm{d}}
\newcommand\bkof{\mathrm{blockOf}}
\newcommand\graph{\mathrm{Graph}}
\newcommand\hc{\widehat{c}}
\newcommand\hd{\widehat{d}}

\newcommand\val{\textup{Val}}
\newcommand\JS{\mathbf{J}} 
\newcommand\lam{\lambda}

\newcommand\s[1]{\text{\scriptsize$#1$}} 
\newcommand\Sess{S_\textup{ess}}
\newcommand\sij{\sigma_{ij}}
\newcommand\sjj{\sigma_{jj}}
\newcommand\set[2]{\{\,#1\mid\mbox{#2}\,\}}
\newcommand\eset[1]{\{\,\mbox{#1}\,\}}
\newcommand\elst[1]{[\,\mbox{#1}\,]}

\newcommand\range[3]{ #1 = #2,\ldots, #3 }

\newcommand\dbd[2]{\frac{\partial #1}{\partial #2}} 

\newcommand\threemx[1]{\left[\begin{array}{ccc}#1\end{array}\right]}
\newcommand\colvect[1]{\left(\!\begin{array}{c}#1\end{array}\!\right)}

\newcommand\bltri{block-triangular\xspace}
\newcommand\daesa{{\sc Daesa}\xspace}
\newcommand\matlab{{\sc Matlab}\xspace}
\newcommand\Pend{\textsc{Pend}\xspace}
\newcommand\SA{structural analysis\xspace}
\newcommand\Smethod{$\Sigma$-method\xspace}
\newcommand\sigmx{signature matrix\xspace}
\newcommand\spatt{sparsity pattern\xspace}
\newcommand\sysJ{System Jacobian\xspace}

\newcommand{\BAmcc}[3]{\BAmulticolumn{#1}{#2}{#3}}
\newcommand{\dbar}{\BAmcc{1}{|c}{}}
\newcommand{\lbar}[1]{\BAmcc{1}{|c}{#1}}

\begin{document}

\title{Graph theory, irreducibility, and structural analysis of differential-algebraic equation systems}

\author{John D. Pryce\footnotemark[3]\footnotemark[6]
\and Nedialko S. Nedialkov\footnotemark[2]\ \footnotemark[4]
\and Guangning Tan\footnotemark[2]\ \footnotemark[5] 
}

\maketitle

\renewcommand{\thefootnote}{\fnsymbol{footnote}}

\footnotetext[3]{Cardiff School of Mathematics, Cardiff University, UK}

\footnotetext[6]{Supported in part by The Leverhulme Trust, UK}

\footnotetext[2]{Department of Computing and Software, McMaster University, Hamilton, Canada}

\footnotetext[4]{Supported in part by the 
       Natural Sciences and Engineering Research Council of Canada (NSERC)}

\footnotetext[5]{Supported  in part by the  Ontario Research Fund (ORF), Canada}        

\renewcommand{\thefootnote}{\arabic{footnote}}

\begin{abstract}
The $\Sigma$-method for structural analysis of a differential-algebraic equation (DAE) system produces offset vectors from which the sparsity pattern of a system Jacobian is derived. This pattern implies a block-triangular form (BTF) of the DAE that can be exploited to speed up numerical solution.

The paper compares this fine BTF with the usually coarser BTF derived from the sparsity pattern of the \sigmx.
It defines a Fine-Block Graph with weighted edges, which gives insight into the relation between coarse and fine blocks, and the permitted ordering of blocks to achieve BTF. It also illuminates the structure of the set of normalised offset vectors of the DAE, e.g.\ this set is finite if and only if there is just one coarse block.
\end{abstract}

\begin{keywords} 
differential-algebraic equations,
structural analysis, 
linear assignment problem,
block-triangular form,
digraph,
irreducibility,
critical path,
PERT
\end{keywords}

\begin{AMS}
34A09, 
65L80, 
41A58, 
65F50 
\end{AMS}

\pagestyle{myheadings}
\thispagestyle{plain}
\markboth{J.D. PRYCE, N. S. NEDIALKOV AND G. TAN}{GRAPH THEORY, IRREDUCIBILITY, AND STRUCTURAL ANALYSIS OF DAES}

\date{\today}

\section{Introduction}

In the paper \cite{NedialkovPryce2012a} the authors survey the theory of structural analysis (SA) of differential-algebraic equation systems (DAEs) via the {\em \Smethod} \cite{Pryce2001a} based on the DAE's {\em\sigmx}. In its companion paper \cite{NedialkovPryce2012b} they present its implementation in the \matlab package \daesa.
The proof of some results stated there was left to future work. This paper presents those proofs, extends the theory, and introduces terminology and notation to express the relevant ideas concisely.

Understanding DAEs is hard because of the {\em index} concept: an integer $\nu\ge0$ (several definitions exist that differ in detail) that measures how ``different'' a DAE is from an ordinary differential equation, hence how ``hard'' it inherently is to solve.
A DAE can be reduced to standard first-order form $F(t,x,x')=0$ but this on its own gives no insight into the index.
Various methods, e.g., those based on the {\em derivative array equations} \cite{campbell1995solvability}, compute the index for a wide class of DAEs but their heavy use of numerical linear algebra makes them expensive.

SA-based methods, being based on the sparsity pattern of the DAE, can be much faster.
They can fail, but have a simple numerical test for success. Experience shows they succeed on most DAEs encountered in practice.
They provide a systematic solution scheme that can be tailored to various numerical methods. They show exactly what initial values are needed.
Till recently the 1989 Pantelides algorithm \cite{Pant88b} has been the most used SA method. 
However, the \Smethod is attractive because of its simplicity; systems up to size 5, say, can be analysed in a few minutes by hand.

\smallskip
Much of SA theory is related to that of sparse systems of linear equations: the notion of a {\em \spatt}, and of being {\em structurally nonsingular}; the analysis of dependencies among equations and variables via the system's {\em bipartite graph} and, when a {\em transversal} is found, by the corresponding {\em directed graph}; reduction to {\em irreducible block triangular form} (BTF), and its relation to {\em connected components} of these graphs.

The \Smethod adds to this a {\em linear assignment problem} (LAP). 
Being a kind of linear programming problem, this has a dual problem, a solution of which we call an {\em offset vector}---the name derives from our algorithm to solve a DAE by Taylor series, \cite{NedialkovPryce05a,nedialkov2008solving}. 
Each such vector defines a {\em \sysJ matrix}, whose associated \spatt is usually a strict subset of the \spatt of $\Sigma$. 
Thus it gives a {\em fine-block} irreducible BTF, usually more informative than the {\em coarse-block} one based on $\Sigma$. 
There is an {\em essential \spatt}, contained in all the Jacobian patterns but dependent only on $\Sigma$, that encapsulates the irreducible fine-block structure.

The main results about these objects were stated in \cite{NedialkovPryce2012a} and are proved here.
New in this paper is the notion of the {\em fine-block graph} (FBG), a digraph whose vertices are the fine blocks, with weighted edges. 
It concisely specifies the set of valid offset vectors.
Its strong components can be identified with the coarse blocks.
It is close in spirit to the graph used in operational research for {\em activity analysis} by the Project Evaluation and Review Technique (PERT), and similar terminology is meaningful, such as {\em critical path} in the graph.

\medskip
%
Since this paper says almost nothing about numerics as such, it is really a study of the structure of LAPs, with an emphasis on large sparse ones.
We hope the structural insights presented here may find uses in other applications of LAPs.
A natural related question is whether they offer a faster way to solve the LAP itself, i.e., find a highest value transversal and dual variables.
We are exploring algorithms and aim to report on this in a future paper.

The companion paper \cite{nedialkov2014qla} studies in detail how BTF can be used (a) to speed up numerical solution, and (b) to find the minimal set of initial values and initial guesses (trial values for a nonlinear solver) that a DAE requires.

Recent work shows that some methods for the DAE initial-value problem (IVP) depend on choosing an offset vector different from the most natural, {\em canonical}, one.
This gives practical interest to analysing the set of valid offset vectors.

\medskip
The structure of the paper is as follows. 
\scref{defns} gives basic results about the LAP that underlies our structural analysis.
\scref{permsqbkforms} defines permuted square-block forms of a matrix, and when two such forms count as equivalent in our context.
\scref{bltridae} gives definitions and key results about \bltri form, BTF. 
\scref{fineblockgraph} studies how the FBG illuminates structural properties of the DAE and its set of offset vectors.
\scref{concl} discusses current, and hopefully future, uses of the theory.

\medskip
The authors wish to acknowledge Alexei A.~Shaleninov, whose work they have recently been made aware of. 
His 1991 paper \cite{Shaleninov1991a} pre-dates the first author's work on DAEs starting 1996, and contains a number of results given in \cite{Pryce2001a}.

\section{Structural analysis and graph theory definitions}\label{sc:defns}~

Terms are set in {\sl slanted font} at their defining occurrence.

\subsection{Signature matrix, assignment problem, offsets}\label{ss:lap}

We consider DAEs of the general form
\begin{align}\label{eq:maineq}
  f_i(\, t,\, \text{the $x_j$ and derivatives of them}\,) = 0, 
  \quad i=1\:n,
\end{align}
where%
\footnote{$p\:q$ for integer $p,q$ denotes either the unordered set, or the enumerated list, of integers $i$ with $p\le i\le q$, depending on context.}
the $x_j(t),\ j=1\:n$ are state variables that are functions of an independent (time) variable $t$. 
The $f_i$ can be arbitrary expressions built from the $x_j$ and $t$ using $+,-,\times,\div$, other analytic standard functions, and the differentiation operator $\d^{p}/\d t^{p}$.

We call our approach the \Smethod, as it is based on the $n\x n$ {\sl \sigmx} $\Sigma$, whose $i,j$ entry $\sij$ is either an integer $\ge0$, the order of the highest derivative to which variable $x_j$ occurs in the function $f_i$; or $\ninf$ if $x_j$ does not occur in $f_i$. 

The start of the structural analysis is to take $\Sigma$ as the matrix of a {\sl linear assignment problem} (LAP). The task is to choose a {\sl transversal}, a set $T$ of $n$ positions $(i,j)$ with just one entry in each row and each column, that maximises the sum $\sum_{(i,j)\in T}\sij$, which we call the {\sl value} of $T$, written $\val(T)$.
That is, we seek a {\sl highest-value transversal}, or HVT---generally nonunique---that gives $\val(T)$ its largest possible value: this value is denoted $\val(\Sigma)$.

The DAE is {\sl structurally well-posed} if it has a {\sl finite transversal}, i.e., a $T$ all of whose $\sij$ are finite (so $\val(T)$ and hence $\val(\Sigma)$ are finite); otherwise it is {\sl structurally ill-posed}.

A LAP is a kind of linear programming problem (LPP), so it has a dual LPP. 
In the formulation we use, this has $2n$ dual variables, $c=(c_1,\ldots,c_n)$ and $d=(d_1,\ldots,d_n)$, associated with the equations and the variables of \rf{maineq}, respectively. The dual%
\footnote{Historically, the inequalities \rf{cidjgeq} were identified first as key to the solution process, and the primal problem, the LAP, derived from them.} %
LPP consists of minimising $\sum d_j-\sum c_i$ subject to
\begin{align}
  d_j-c_i\ge \sij \text{ for all $i,j$}. \label{eq:cidjgeq}
\end{align}
Assume henceforth a structurally well-posed DAE. Then both the primal and the dual LPP have feasible solutions, so the two objective functions have the same, finite, optimal value, see e.g.~\cite{Beale1968mathematical,murty1983linear}, which is $\val(\Sigma)$. 
When the \Smethod succeeds (see \ssrf{sysj}), this equals the {\sl number of degrees of freedom} (DOF) of the DAE: the number of independent initial values required to solve an IVP \cite{Pryce2001a}.

Shifting by a constant, i.e.\ changing to $c_i'=c_i+K,\; d_j'=d_j+K$ for an arbitrary $K$, does not affect \rf{cidjgeq}. Hence various sign constraints can be included in the dual LPP without changing any property that is unaffected by a shift.
We impose
\begin{align}
  c_i\ge0 \text{ for all $i$}, \label{eq:cige0}
\end{align}
natural in the context of formula \rf{sysjac} for the \sysJ, and of solving DAEs by Taylor series.

\begin{lemma}\label{lm:cd_transversal}
  For any $n$-vectors $c,d$, if \rf{cidjgeq} holds---\rf{cige0} is not necessary---and
\begin{align}
   d_j - c_i &= \sij \eqntxt{for all $(i,j) \in T$}\label{eq:cidjeq}
\end{align}
holds for {\em some} transversal $T$, then:\\
(i) $T$ is a HVT. \\
(ii) Equation \rf{cidjeq} holds for {\em all} HVTs.
\end{lemma}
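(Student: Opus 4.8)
The plan is to run the standard weak‑duality / complementary‑slackness argument for the assignment LPP, made concrete in terms of transversals. The key observation is that for \emph{any} transversal $S$, summing the feasibility inequalities \rf{cidjgeq} over the positions of $S$ gives
$\val(S)=\sum_{(i,j)\in S}\sij\le\sum_{(i,j)\in S}(d_j-c_i)$; and since $S$ contains each row index and each column index exactly once, the right-hand side collapses to the \emph{$S$-independent} quantity $Z:=\sum_j d_j-\sum_i c_i$. Hence every transversal has value at most $Z$.

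First I would apply this to the distinguished transversal $T$ of the hypothesis: because \rf{cidjeq} turns the inequality into equality at every position of $T$, the same summation yields $\val(T)=Z$. Combined with the bound just derived, $\val(T)=Z\ge\val(S)$ for every transversal $S$, which is exactly the assertion that $T$ is a HVT (so in fact $\val(\Sigma)=Z$); this is (i). I would note in passing that, since the $c_i,d_j$ are finite reals, \rf{cidjeq} forces $\sij$ to be finite for $(i,j)\in T$, so $T$ is automatically a finite transversal, consistent with structural well-posedness — and that \rf{cige0} is nowhere used.

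For (ii), let $T'$ be an arbitrary HVT. By (i), $\val(T')=\val(\Sigma)=Z$, so $\sum_{(i,j)\in T'}\sij=Z=\sum_{(i,j)\in T'}(d_j-c_i)$, whence $\sum_{(i,j)\in T'}\bigl((d_j-c_i)-\sij\bigr)=0$. Each summand is $\ge0$ by \rf{cidjgeq}, and a sum of nonnegative terms vanishes only if every term vanishes; therefore $d_j-c_i=\sij$ for all $(i,j)\in T'$, which is \rf{cidjeq} for $T'$.

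The only point needing a word of care — and it is minor — is bookkeeping with the $\ninf$ entries: one must observe that \rf{cidjgeq} is vacuous where $\sij=\ninf$, and that the transversals actually entering the argument carry only finite entries (the HVTs because structural well-posedness guarantees a finite HVT and all HVTs share the value $\val(\Sigma)$; the given $T$ because \rf{cidjeq} makes its entries finite). Once that is in place, every sum above is a sum of real numbers and all the manipulations are legitimate. I expect this to be the only subtle spot; the remainder is the two-line weak-duality collapse.
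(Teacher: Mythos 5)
Your proof is correct. Note that the paper gives no argument of its own for this lemma---it simply cites \cite[Theorem 3.4(iii)]{Pryce2001a} and \cite[eqn (2.5)]{NedialkovPryce05a}---and your weak-duality/complementary-slackness argument (summing \rf{cidjgeq} over a transversal to get the transversal-independent bound $\sum_j d_j-\sum_i c_i$, equality on $T$ giving (i), and the vanishing of a sum of nonnegative slacks giving (ii)) is precisely the standard proof underlying those citations, with the $\ninf$ bookkeeping handled correctly.
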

\begin{proof}
See \cite[Theorem 3.4(iii)]{Pryce2001a}, or \cite[eqn (2.5)]{NedialkovPryce05a}.
\qquad\end{proof}

Any dual-optimal vectors $c,d$---equivalently by this Lemma solutions of
\begin{align}\label{eq:generalcd}
  d_j-c_i&\ge \sij \ \text{ with equality on some HVT, hence on all HVTs},
\end{align}
---are termed {\sl general offsets}. If also \rf{cige0} holds they are termed {\sl valid offsets}.
Clearly a unique shift $K$ may be chosen to produce {\sl normalised offsets} satisfying
\begin{align}\label{eq:normalcd}
 \min_i c_i=0.
\end{align}

From \cite[Theorem 3.6]{Pryce2001a} there exists a unique elementwise smallest solution of \rf{cige0,generalcd}, the {\sl canonical offset vector}. It is often convenient to base the SA and numerical solution on these. (Clearly, canonical $\implies$ normalised $\implies$ valid $\implies$ general.)

\subsection{\sysJ}\label{ss:sysj}

Crucial for a numerical solution method is the $n\x n$ {\sl \sysJ} matrix $\JS = \JS(c,d)$ associated with given valid offset vectors $c,d$. It is defined by
\begin{align} \label{eq:sysjac}
  \ds
  \JS_{ij} \ =\  \dbd{f_i^{(c_i)}}{x_j^{(d_j)}}
    \ =\  \dbd{f_i}{x_j^{(d_j-c_i)}}
    \ =\  \left\{ \begin{array}{cl}
      \ds\dbd{f_i}{x_j^{(\sij)}} & \text{if $d_j - c_i = \sij$,\; and}\\[3ex]
      0 & \text{otherwise},
    \end{array}\right.
\end{align}
for $i,j\in1\:n$.
({\em Griewank's Lemma}, see e.g.\ \cite{griewank2008evaluating}, shows the first and second formulae are equivalent.) 
If a consistent point, in the sense defined in \cite{Pryce2001a}, can be found at which $\JS$ is nonsingular, then at least locally there exists a solution path of the DAE through that point. In this case the \Smethod is said to {\sl succeed}. Various numerical methods based on the SA can then be used to compute such a path, such as:
\begin{itemize}
  \item Taylor series expansion \cite{nedialkov2008solving};
  \item index reduction using dummy derivatives \cite{Matt93a} followed by solution with a standard index-1 DAE solver or implicit ODE solver.
\end{itemize}

\begin{example}\label{ex:pend0}
Below is shown the DAE of the simple pendulum in Cartesian coordinates, together with its \sigmx and \sysJ, using canonical offsets.
{
\begin{align}\label{eq:pend0}
&\begin{array}{l}
 \mc1c{\text{\Pend}}      \\
 0 = A = x'' +x \lam   \\
 0 = B = y''+y\lam-G   \\
 0 = C = x^2+y^2-L^2   \\
 ~
\end{array},
\ \Sigma =
\begin{blockarray}{r@{}cccl}
    &x &y & \lam & \s{c_i}\\
 \begin{block}{@{~}r@{~~}[ccc]l}
  A & 2^\bullet&   & 0^\circ &\s0 \\
  B &   & 2^\circ & 0^\bullet & \s0 \\
  C & 0^\circ & 0^\bullet &   & \s2 \\[-.5ex]
\end{block}
\s{d_j}          &\s2&\s2 & \s0 
\end{blockarray},
\ \JS =
\begin{blockarray}{c@{}ccc}
      &x''&y''& \lam \\
 \begin{block}{@{~}c@{~~}[ccc]}
  A   & 1 & 0 & x \\
  B   & 0 & 1 & x \\
  C'' &2x &2y & 0 \\
\end{block}
\\
\end{blockarray}
\end{align}
}

\vspace{-4ex}
Here $x$, $y$, $\lam$ are state variables; $G,L$ are constants $>0$. A blank in $\Sigma$ means $\ninf$.

The rows and columns of $\Sigma$ are annotated on the left and top with the names of the functions $A,B,C$ and the variables $x,y,\lam$, and on the right and bottom with valid (in this case canonical) offsets. There are two HVTs, marked with $\bullet$ and $\circ$. 

An alternative to showing the offsets explicitly is to show the $i$th function [respectively $j$th variable] differentiated to order $c_i$ [resp.\ $d_j$], as shown for $\JS$.
It is a reminder of the first formula for $\JS$ in \rf{sysjac}: here, $\JS = \partial(A,B,C'')/\partial(x'',y'',\lam)$.
\end{example}

\subsection{Graph theory notation and terminology}\label{ss:graphdefs}

For clarity we state the notions from graph theory that we use, see for instance \cite{balakrishnan2013textbook,Coleman1986,Duff86a,Pothen1990}.
A {\sl directed graph}, or {\sl digraph}, is a pair $G=(V,E)$ where $V$ is a nonempty set whose elements are the {\sl vertices} (or nodes) and $E$ is a set of ordered pairs $e=(u,v)$ of vertices. 
$e$ is the edge {\sl from $u$ to $v$}, often written $u\to v$. 
By the nature of a set there is at most one edge from $u$ to $v$.
We require $u\ne v$: there are no edges from a vertex to itself.

A {\sl path} in $G$ is a sequence of edges where the end of each one is the start of the next. 
It may be written $v_0 \to v_1 \to \ldots \to v_m$, where $v_{i-1}\to v_i$ is an edge for each $i=1\:m$.
It is allowed to have repeated vertices and/or edges.
If there is a path from $u$ to $v$ (i.e., a path having $v_0=u$ and $v_m=v$) then $v$ is {\sl reachable} from $u$.

A {\sl cycle}, or closed path, is a path of nonzero length for which $v_0=v_m$.
It may have repeated vertices and/or edges in addition to this.

A general {\sl subgraph} of $G$ means $(V',E')$ where $V'$ and $E'$ are arbitrary subsets of $V$ and $E$. 
The subgraph {\sl generated} by $V'$ has $E' = \eset{all $e\in E$ joining elements of $V'$}$.

A digraph $G$ is {\sl acyclic} if it has no cycles. 
It is {\sl strongly connected} if for any two vertices, each is reachable from the other.

An {\sl undirected graph} is one whose edges are bidirectional (equivalently, they come in pairs $u\to v$ and $v\to u$).
For such a graph, if $v$ is reachable from $u$ then $u$ is reachable from $v$, and one just calls it {\sl connected} rather than strongly connected.

Every digraph decomposes into {\sl strong components}. Namely there is a unique partition $\eset{$V_1, \ldots, V_q$}$ of $V$ such that the subgraph $G_i$ generated by $V_i$---a strong component---is strongly connected for each $i$, and every cycle in $G$ is contained in one of the components. 
Thus given two strong components, one may be reachable from the other in $G$, but never in both directions. 

\section{Permuted square-block forms}\label{sc:permsqbkforms}

This section defines block form to be the result of partitioning the equations and variables into matching sets that may be regarded as sub-DAEs of the whole system. Suitably permuted, they define a block structure on relevant matrices, such that blocks on the main diagonal are square. It introduces the notion of {\em emblem} to express when two such forms count as ``the same''.

\subsection{Definitions}

A permuted form of a DAE \rf{maineq} is created by forming permutations $\~f=(\~f_1,\ldots,\~f_n)$ and $\~x=(\~x_1,\ldots,\~x_n)$ of the vectors of equations $f_i$ and variables $x_j$. A (permuted) {\sl square-block form} is a triple $\B=(\~f,\~x,N)$ where $\~f$ and $\~x$ are as above, and $N$ is a vector $(N_1,\ldots,N_p)$ of positive integers summing to $n$, which put $\~f$ and $\~x$ in block form $(F_1,\ldots,F_p)$ and $(X_1,\ldots,X_p)$, where each of sub-vectors $F_l$ and $X_l$ has $N_l$ elements for $l=1\:p$.

Then the \sigmx $\Sigma$ and a Jacobian $\JS$ are permuted correspondingly and put in $p\x p$ block form:
\begin{align}\label{eq:blockSigmaJ}
  \~\Sigma =
   \threemx{\~\Sigma_{11}, &\cdots, &\~\Sigma_{1p} \\
            \vdots &&\vdots \\
            \~\Sigma_{p1,} &\cdots, &\~\Sigma_{pp}},
  &&\~\JS =
   \threemx{\~\JS_{11}, &\cdots, &\~\JS_{1p} \\
            \vdots &&\vdots \\
            \~\JS_{p1,} &\cdots, &\~\JS_{pp}},
\end{align}
where the sub-matrix in position $(k,l)$ has size $N_k$ by $N_l$. 
In particular each diagonal sub-matrix $\~\Sigma_{ll}$ or $\~\JS_{ll}$ is square of size $N_l$.

The equations (rows) and variables (columns) have names that form labels attached to them and are carried with them under permutations. 
In the example DAEs in this paper, specific names are given: e.g.\ \rf{pend0} has $A,B,C$ for the equations and $x,y,\lam$ for the variables, and these are the labels. 
For a generic DAE, we name the functions $f_1,\ldots,f_n$, the variables $x_1,\ldots,x_n$. As labels we use either these names, or the indices written boxed---e.g.\ $\rclbl2$ for $f_2$ or $x_2$.

In examples, horizontal and vertical lines may display block structure.

\medskip
In the context of square-block form a {\sl block} means, not a sub-matrix such as $\~\Sigma_{kl}$, but a set of $N_l$ equations and corresponding set of $N_l$ variables, associated with the $l$th diagonal sub-matrix. Depending on context (see \ssrf{sqbkequiv}) it may be treated as a pair of sets (unordered) of row/column labels; or a pair of lists (ordered) of such labels; or in other ways.

For instance, the following versions of a signature matrix, annotated with function- and variable-labels, describe the same permutation of \rf{pend0} and an accompanying square-block form.
The first uses the given names $A,B,C$, $x,y,\lam$; the second uses the generic indexing.

The permuted offsets are also shown. It is easy to see from their definition \rf{cidjgeq} that they can be regarded as attached to rows and columns and carried around like the equation- and variable-labels.
That is, permuted $\~c,\~d$ are general [or valid or normalised or canonical] offsets of the permuted problem iff $c,d$ are general [or etc.] offsets of the original problem.
{
\begin{align}
  \label{eq:pendperm1}
  \~\Sigma &=
  \begin{blockarray}{rcccl}
         &  y &\lam&  x &\s{\~c_i}\\
      \begin{block}{r[cc|c]l}
       C &  0 &    &  0 &  \s2 \\
       A &    &  0 &  2 &  \s0 \\\cline{2-4}
       B &  2 &  0 &    &  \s0 \\
      \end{block}
  \s{\~d_j}& \s2& \s0& \s2 
  \end{blockarray}
  \ =\ 
  \begin{blockarray}{rcccl}
      & \rclbl2 &\rclbl3&\rclbl1&\s{\~c_i}\\
      \begin{block}{r[cc|c]l}
       \rclbl3 &  0 &    &  0 &  \s2 \\
       \rclbl1 &    &  0 &  2 &  \s0 \\\cline{2-4}
       \rclbl2 &  2 &  0 &    &  \s0 \\
      \end{block}
        \s{\~d_j}& \s2& \s0& \s2 
  \end{blockarray}.
\end{align}
The following describes \rf{pendperm1} in generic block form:
\begin{align*}
  \~\Sigma &=
  \renewcommand{\BAextrarowheight}{.5ex}
  \begin{blockarray}{rcc}
         & X_1^T & X_2^T \\
      \begin{block}{r[c|c]}
       F_1 &\~\Sigma_{11} &\~\Sigma_{12} \\\cline{2-3}
       F_2 &\~\Sigma_{21} &\~\Sigma_{22} \\
      \end{block}
  \end{blockarray},
  \eqntxt{where $F_1 = \colvect{C\\A}$, $F_2 = (B)$, $X_1 = \colvect{y\\\lam}$, $X_2 = (x)$.}
\end{align*}
}
The relation between original and permuted rows and columns, and between original and permuted offset vectors, is
\begin{align*}
  \~f_i = f_{\rho(i)}, \quad&\~x_j = x_{\kappa(j)}, \\
  \~c_i = c_{\rho(i)}, \quad&\~d_j = d_{\kappa(j)},
\end{align*}
where $\rho(i)$ and $\kappa(j)$ are the labels of row $i$ and column $j$ in the permuted form for $i,j=1\:n$.
Clearly, regarded as maps of $1\:n$ to itself, $\rho$ and $\kappa$ are permutations.
For example in \rf{pendperm1} one has $\rho(1)=3$, $\rho(2)=1$, $\rho(3)=2$.

Similarly, for a \sigmx and Jacobian the relation between original $\Sigma$, $\JS$ and permuted $\~\Sigma$, $\~\JS$ is
\begin{align}
  \~\sigma_{ij} = \sigma_{\rho(i),\kappa(j)}, \quad&\~\JS_{ij} = \JS_{\rho(i),\kappa(j)},
\end{align}
and for a pattern $A$ and permuted pattern $\~A$ regarded as sets---including the case that $A$ is a transversal and $\~A$ the permuted transversal---
\begin{align}\label{eq:rhokappa3}
  (i,j)\in \~A \iff &(\rho(i),\kappa(j))\in A,
\end{align}

For $l\in1\:p$ define
\begin{align}\label{eq:rcbkof1}
  B_l &=\ \text{the set of integer indices $i$ that belong to block $l$},
\shortintertext{that is,}
  B_1 &= 1\:N_1,\quad B_2 = (N_1{+}1)\:(N_1{+}N_2),\quad \ldots, \notag
\end{align}
forming a partition of $1\:n$.
Note $i$ denotes a current row or column {\em index}, not an original row or column {\em label}, so the notation applies equally to rows and columns.

We also use $\bkof(i)$ to denote the $l$ such that $i\in B_l$, so for $i\in1\:n$, $l\in1\:p$
\begin{align}\label{eq:rcbkof2}
\begin{split}
  \bkof(i)=l &\iff i\in B_l \\
    &\iff \sum_{r=1}^{l-1} N_r < i \le \sum_{r=1}^{l} N_r.
\end{split}
\end{align}
To know which block an original {\em label} belongs to, use the inverse permutations to $\rho$ and $\kappa$. For example, original equation $\rclbl{i}$ lies in block $l=\bkof(\rho\`(i))$.

\subsection{Equivalence and the emblem}\label{ss:sqbkequiv}
When comparing different permuted square-block forms of a given DAE, one must specify when a block in one is considered ``the same'' as a block in another; hence, when two block forms are considered ``the same''.
We count a block as unchanged by a permutation of equations and/or variables purely within that block; and the whole block form as unchanged by a permutation of the blocks as a whole. For instance in the forms
{\small
\begin{align}
\begin{split}\label{eq:genbkex}
\begin{blockarray}{rcccc}
    & w & x & y & z \\
  \begin{block}{r[c|c|cc]}
  f &\x &\x &\x &\x \\
  \BAhline
  g &\x &\x &\x &\x \\
  \BAhline
  h &\x &\x &\x &\x \\
  k &\x &\x &\x &\x \\
  \end{block}
\end{blockarray}\;,
\quad
\begin{blockarray}{rcccc}
    & x & w & y & z \\
  \begin{block}{r[c|c|cc]}
  g &\x &\x &\x &\x \\
  \BAhline
  f &\x &\x &\x &\x \\
  \BAhline
  k &\x &\x &\x &\x \\
  h &\x &\x &\x &\x \\
  \end{block}
\end{blockarray}\;,
\\
\begin{blockarray}{rcccc}
    & x & w & y & z \\
  \begin{block}{r[c|c|cc]}
  f &\x &\x &\x &\x \\
  \BAhline
  g &\x &\x &\x &\x \\
  \BAhline
  k &\x &\x &\x &\x \\
  h &\x &\x &\x &\x \\
  \end{block}
\end{blockarray}\;,
\quad
\begin{blockarray}{rcccc}
    & z & y & w & x \\
  \begin{block}{r[cc|cc]}
  h &\x &\x &\x &\x \\
  k &\x &\x &\x &\x \\
  \BAhline
  f &\x &\x &\x &\x \\
  g &\x &\x &\x &\x \\
  \end{block}
\end{blockarray}\;,
\end{split}
\end{align}
}
---assuming the matrix bodies are permuted to match---the block with equations $h,k$ and variables $y,z$ is ``the same'' in all of them. Forms 1 and 2 are ``the same'' as a whole because both have $f$ matched with $w$; $g$ with $x$; and $h,k$ with $y,z$---though in different orders. Form 3 has block sizes the same as forms 1 and 2 but differs from them because it has $f$ matched with $x$ and $g$ with $w$. Form 4 must be different from forms 1, 2 and 3, since it has different block sizes.

We describe this formally as follows. 
In a square-block form $\B$, the {\sl row-set} $R_l$ of block $l$ is the set of equation-labels of that block; its {\sl column-set} $C_l$ is its set of variable-labels. 
By definition, each $R_l$ is paired with a $C_l$ having the same number of elements: $|R_l|=|C_l| = N_l$. Its {\sl emblem} is the set of pairs
\begin{align}\label{eq:sbembdef}
  \embE = \sbemb(\B) = \eset{$(R_1,C_1),\ldots,(R_p,C_p)$}.
\end{align}
Abstractly, an emblem is {\em an unordered set of ordered pairs of unordered sets} as shown in \rf{sbembdef}, where $\eset{$R_1,\ldots,R_p$}$ and $\eset{$C_1,\ldots,C_p$}$ are partitions%
\footnote{A partition of a set $X$ is a pairwise disjoint family of nonempty subsets of $X$ whose union is $X$.}
of the sets of equation- and variable-labels respectively---of $1\:n$ if using generic labeling.
Its purpose is to specify a square-block structure up to equivalence.
 
When a permuted block form is shown graphically, the main block-diagonal indicates the $(R_l,C_l)$ pairing. In \rf{genbkex}, forms 1 and 2 are equivalent, having emblem
\begin{align}
  \embE_1 &=\eset{$ (\{f\},\{w\}),\ (\{g\},\{x\}),\ (\{h,k\},\{y,z\}) $},
  \label{eq:genbkex1}
\shortintertext{while forms 3 and 4 have the different emblems}
  \embE_2 &=\eset{$ (\{f\},\{x\}),\ (\{g\},\{w\}),\ (\{h,k\},\{y,z\}) $}
  \eqntxt{and} \notag \\
  \embE_3 &=\eset{$ (\{f,g\},\{w,x\}),\ (\{h,k\},\{y,z\}) $}. \notag
\end{align}

A given permuted block form {\sl enumerates} its emblem, which means listing the pairs $(R_l,C_l)$, and the members of each $R_l$ and $C_l$, in some order---using square bracket notation for lists, this amounts to ``turning braces to brackets'' in an explicit description. For instance, forms 1 and 2 of \rf{genbkex} enumerate $\embE_1$ of \rf{genbkex1} in two ways:
\begin{align*}
  &\elst{$ ([f],[w]),\ ([g],[x]),\ ([h,k],[y,z]) $},\eqntxt{and} \\
  &\elst{$ ([g],[x]),\ ([f],[w]),\ ([k,h],[y,z]) $}.
\end{align*}

From the above definitions we clearly have:
\begin{theorem}
  Two square-block forms have the same emblem, and are then called {\sl block-equivalent}, if and only if one can be converted to the other by permuting the blocks as a whole, and/or equations or variables within an individual block.
\end{theorem}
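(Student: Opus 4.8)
The plan is to prove the two implications of the ``if and only if'' separately, in each case simply unwinding the definition of the emblem in \rf{sbembdef}: it is an unordered set of pairs $(R_l,C_l)$ of (unordered) row- and column-sets of labels.

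For the ``if'' direction, suppose $\B'$ is obtained from $\B=(\~f,\~x,N)$ by a finite sequence of the two permitted moves; it suffices to show that each elementary move leaves the emblem fixed, since it is then invariant under their composition. A permutation of equations (resp.\ variables) purely within block $l$ merely reorders the labels inside the single set $R_l$ (resp.\ $C_l$), which as an unordered set is unchanged, so every pair $(R_l,C_l)$ — and hence the whole emblem — is untouched. A permutation of the blocks as a whole permutes the ordered pairs $(R_1,C_1),\ldots,(R_p,C_p)$ among themselves; but the emblem being an unordered set of such pairs, it too is unchanged. Hence $\sbemb(\B')=\sbemb(\B)$.

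For the ``only if'' direction, suppose $\B$ and $\B'$ have the common emblem $\embE$, and write $R'_l,C'_l,N'_l,p'$ for the block data of $\B'$. Because the $R_l$ partition the equation-labels, they are nonempty and pairwise disjoint, so the pairs occurring in $\embE$ are pairwise distinct; counting them gives $p=p'$, and there is a permutation $\pi$ of $1\:p$ with $(R'_l,C'_l)=(R_{\pi(l)},C_{\pi(l)})$ for every $l$. Now I would first apply to $\B$ the block permutation that lists its blocks in the order $\pi(1),\ldots,\pi(p)$, producing an intermediate form $\B''$ with block-size vector $(N_{\pi(1)},\ldots,N_{\pi(p)})=N'$ and, block for block, the same row-set $R'_l$ and column-set $C'_l$ as $\B'$. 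The forms $\B''$ and $\B'$ can then differ only in the order in which the members of $R'_l$ are listed along the rows, and those of $C'_l$ along the columns, within each block $l$; for each $l$, applying the within-block permutation of the $N'_l$ rows, and the one of the $N'_l$ columns, that carries the $\B''$-order to the $\B'$-order converts $\B''$ to $\B'$. Thus $\B'$ results from $\B$ by one block permutation followed by within-block permutations, as required.

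The argument is essentially bookkeeping, and the one step needing care is making the informal phrases precise: that ``permuting the blocks as a whole'' is a legitimate operation on the triple $(\~f,\~x,N)$ — it moves entire contiguous segments of $\~f$ and of $\~x$ in tandem and permutes the entries of $N$ to match — and that, once the blocks of $\B''$ and $\B'$ agree setwise, the residual mismatch genuinely factors into independent per-block row- and column-permutations. Both facts are immediate from the partition property built into the abstract definition of emblem, which is why the statement can fairly be prefaced ``we clearly have''.
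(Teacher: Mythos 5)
Your proof is correct, and it is essentially the argument the paper has in mind: the paper offers no explicit proof, prefacing the theorem with ``From the above definitions we clearly have,'' and your two-direction bookkeeping (elementary moves fix the unordered set of pairs; a common emblem yields a block permutation $\pi$ followed by within-block row and column permutations) is precisely the routine verification being taken for granted.
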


This is purely about block structure---assigning equations and variables to blocks. It says nothing about matrix values $\sij$, etc.\ or, until \scref{bltridae}, about sparsity issues.

In terms of the DAE, we call blocks {\sl parts}. Part $l$ means the system of equations and variables specified by $R_l$ and $C_l$, regarded as a DAE of size $N_l$ in its own right, with the relevant sub-matrices of (permuted) $\Sigma$ and $\JS$, sub-vectors of offsets, etc.
In $\Sigma$ and $\JS$ the off-diagonal blocks in a part's block-column [-row] indicate how it influences [is influenced by] other parts.

In terms of an underlying physical model, a part is usually an identifiable component of the whole system, though this is not mathematically necessary.
E.g., \rf{pendperm1} shows a split of the pendulum system into parts with no obvious physical meaning.

\section{Block-triangular form within the DAE}\label{sc:bltridae}

In this section, \ssrf{findbltri} defines the relevant \spatt{s} and proves some basic BTF facts. 
\ssrf{irredbltri} introduces irreducibility, and coarse and fine block form, and proves the key result that the fine-block form of a DAE is unique, in the sense that its emblem is independent of the offsets used to compute it.
\ssrf{caveat} notes some caveats regarding this analysis.

\subsection{Sparsity patterns and BTF}\label{ss:findbltri}

\subsubsection{Sparsity pattern definitions}

A BTF of \rf{maineq} is obtained from a {\sl \spatt}, which is some subset $A$ of $\{1,\ldots,n\}^2$, the $n\x n$ matrix positions $(i,j)$ for $i$ and $j$ from 1 to $n$. When appropriate, we identify $A$ with its $n\x n$ {\sl incidence matrix}
\bc $(a_{ij})$ where $a_{ij}=1$ if $(i,j)\in A$, 0 otherwise. \ec
In the context of a permuted pattern $\~A$, the $(i,j)$ always refer to the {\em original} equation- and variable-indices (the labels). For example, the set $A$ where the $\sij>\ninf$ for the pendulum DAE \rf{pend0}, in the permuted form of \rf{pendperm1}, may be shown:
\begin{align*}
  \~A =
  \begin{blockarray}{rccc}
         &  y &\lam&  x \\
      \begin{block}{r[cc|c]}
       C & 1  &    & 1   \\
       A &    & 1  & 1   \\\cline{2-4}
       B & 1  & 1  &     \\
      \end{block}
  \end{blockarray}
    \ =\ 
  \begin{blockarray}{rccc}
      & \rclbl2 &\rclbl3&\rclbl1\\
      \begin{block}{r[cc|c]}
       \rclbl3 & 1  &    & 1  \\
       \rclbl1 &    & 1  & 1  \\\cline{2-4}
       \rclbl2 & 1  & 1  &    \\
      \end{block}
  \end{blockarray}.
\end{align*}
  It is written as a sparse incidence matrix, i.e.\ blank entries denote zero.
The element in row 3, column 1 is the $(B,y)$ entry, or generic notation the $(f_2,x_2)$ entry (since $\~f_3 = B = f_2$ and $\~x_1 = y = x_2$ for this permutation).

A natural \spatt for a DAE is the set where the entries of $\Sigma$ are finite:
\begin{align}
  S &= \set{(i,j)}{$\sij>\ninf$}
  &\text{(the {\sl \spatt of $\Sigma$})}.\label{eq:sparsS}
\shortintertext{However, a more informative BTF comes from the \spatt of the Jacobian $\JS$ defined in \rf{sysjac} (but see the caveats in \ssrf{caveat}). It depends, as does $\JS$, on the (valid) offsets $c,d$ used:}
  S_0=S_0(c,d) &= \set{(i,j)}{$d_j-c_i=\sij$}
  &\text{(the {\sl \spatt of $\JS$})}.\label{eq:sparsS0}
\end{align}
An important, less obvious set%
\footnote{It was called $S_{00}$ in \cite{Pryce2001a}; see also \cite{NedialkovPryce05a}.}%
, is
\begin{align}
  \Sess &= \text{the union of all HVTs of $\Sigma$}.
  &\text{(the {\sl essential \spatt})}. \label{eq:sparsSess}
\end{align}
Since $d_j-c_i=\sij$ holds on each HVT by \rf{generalcd}, and implies $\sij>\ninf$, we have
\begin{align*}
  \Sess &\subseteq S_0(c,d) \subseteq S &\text{for any $c,d$}.
\end{align*}
Experience suggests that in applications, a BTF based on $S_0$ is usually significantly finer than one based on $S$. 
We refer to any instance of the former as a {\sl fine} BTF, and to the latter as the {\sl coarse} BTF.

\subsubsection{Definition of BTF}
Following common usage in the literature, e.g.~\cite{Pothen1990}, also used by \daesa, we use {\sl upper} \bltri form as below: 
{\small
\begin{align*}
  \Sigma =
   \left[\begin{array}{*4{@{\;}c}@{}}
     \Sigma_{11},&\Sigma_{12}, &\cdots, &\Sigma_{1p} \\
     \ninf,      &\Sigma_{22}, &\cdots, &\Sigma_{2p} \\
     \vdots      &\ddots       &\ddots  &\vdots \\
     \ninf,      &\cdots,      &\ninf   &\Sigma_{pp},
   \end{array}\right],
  &&\JS =
   \left[\begin{array}{*4{@{\;}c}@{}}
     \JS_{11}, &\JS_{12}, &\cdots, &\JS_{1p} \\
     0,       &\JS_{22}, &\cdots, &\JS_{2p} \\
     \vdots   &\ddots    &\ddots  &\vdots \\
     0,       &\cdots,   &0       &\JS_{pp},
   \end{array}\right],
  &&A =
  \def\0{\emptyset}
   \left[\begin{array}{*4{@{\;}c}@{}}
     A_{11},  &A_{12},  &\cdots, &A_{1p} \\
     \0,      &A_{22},  &\cdots, &A_{2p} \\
     \vdots   &\ddots   &\ddots  &\vdots \\
     \0,      &\cdots,  &\0      &A_{pp},
   \end{array}\right].
\end{align*}
}\noindent
That is, given a permuted square-block form---an enumeration of some emblem---the permuted \sigmx $\Sigma$ is in BTF if the below-diagonal sub-matrices, $\Sigma_{kl}$ where $k>l$, are all $\ninf$. A corresponding Jacobian $\JS$ is in BTF if the below-diagonal sub-matrices are all zero. When a \spatt $A$ is viewed as a set of $(i,j)$'s, its sub-matrices are the pairwise disjoint {\sl sub-patterns}, the sets 
\begin{align}\label{eq:patternperm}
   A_{kl} = A\cap(B_k\x B_l),
\end{align}
(where the $B$s are as in \rf{rcbkof1}) and $A$ is in BTF if the below-diagonal $A_{kl}$ are empty.

Clearly, an arbitrary permutation of rows or columns within a block---an enumeration of the emblem that keeps the same pairs $(R_l,C_l)$, in the same order, but lists the members of some $R_l$ or $C_l$ differently---preserves BTF, in the sense that the result is still in upper BTF with the same {\em sequence} of block sizes $N_1,N_2,\ldots$.

A permutation of the blocks as a whole---an enumeration that lists the $(R_l,C_l)$ in a different order---produces a new block form that is usually not in BTF (unless, for instance, the original form is actually block-diagonal).

Basing BTF on a \spatt $S_0$ of \rf{sparsS0} puts the corresponding $\JS$ into BTF by definition but may not do so for $\Sigma$, as the next example shows.
\begin{example}
  Suppose in the pendulum DAE \rf{pend0}, $x$ is changed to $x'$ in the third equation. This gives the following. The canonical offsets, shown on $\Sigma$, are used to compute $\JS$, and indicated on $\JS$ using the convention described in \exref{pend0}. 
{
\def\C{^\circ}
\begin{align*}
&\begin{array}{l}
 \\
 0 = A = x'' +x \lam   \\
 0 = B = y''+y\lam-G   \\
 0 = C = (x')^2+y^2-L^2   \\
 ~
\end{array}
&& \Sigma =
\begin{blockarray}{rcccl}
    &x &y & \lam & \s{c_i}\\
 \begin{block}{r[ccc]l}
  A & 2 &   &0\C& \s0 \\
  B &   &2\C& 0 & \s0 \\
  C &1\C& 0 &   & \s1 \\[-.5ex]
\end{block}
\s{d_j}& \s2& \s2& \s0 
\end{blockarray}
&& \JS =
\begin{blockarray}{rccc}
    &x'' &y'' & \lam \\
 \begin{block}{r[ccc]}
  A & 1 & 0 & x \\
  B & 0 & 1 & y \\
C' &2x'& 0 & 0 \\
\end{block}
\\
\end{blockarray}
\end{align*}
}
Note $\sigma_{31}$ changes from 0 to 1, $c_3$ from 2 to 1, and $\JS_{32}$ is now 0. Hence
\bc $S_0(c,d)=\threemx{1& &1\\ &1&1\\1& & }$ is a proper subset of $S=\threemx{1& &1\\ &1&1\\1&1& }$.
\ec
The following form puts $\JS$ into BTF with three blocks of size 1:
{
\def\C{^\circ}
\renewcommand\arraystretch{1.3}
\begin{align*}
\~\Sigma =
\begin{blockarray}{rcccl}
       & y &\lam& x &\s{c_i}\\
    \begin{block}{r[ccc]l}
     B &2\C& 0  &   &\s0 \\
     A &   &0\C & 2 &\s0 \\
     C & 0 &    &1\C&\s1 \\[-.5ex]
    \end{block}
\s{d_j}&\s2&\s0 &\s2
\end{blockarray}
&& \~\JS =
\begin{blockarray}{rccc}
     &y''&\lam& x'' \\
  \begin{block}{r[c|c|c]}
   B & 1 & y  & 0 \\\cline{2-4}
   A & 0 & x  & 1 \\\cline{2-4}
  C' & 0 & 0  &2x'\\
  \end{block}
 \\
\end{blockarray}
\end{align*}
\\[-4ex]}
but does not put $\Sigma$ into BTF.
\end{example}

\subsubsection{Results on transversals and BTF}
Several results of this section apply to an arbitrary pattern $A$, but, to simplify notation, the proofs ``assume without loss'' that $A$ is already in BTF. For instance, \thref{diagblocks} is to be interpreted as follows. 
\begin{quote}\em
$T$ is a transversal of $A$, and $\~A$ is a permuted form of $A$ that is in BTF. Then $\~T$, the corresponding permutation of $T$ following \rf{rhokappa3}, which is a transversal of $\~A$, is contained in the union of the diagonal blocks of $\~A$.
\end{quote}

\begin{theorem}\label{th:diagblocks}~
  Any transversal of (i.e., contained in) a \spatt $A$ is contained in the union of the diagonal blocks of any BTF of $A$.
\end{theorem}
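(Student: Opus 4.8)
The plan is to use the paper's stated convention and assume at the outset that $A$ is already displayed in the BTF in question, with diagonal blocks indexed by the sets $B_1,\ldots,B_p$ of sizes $N_1,\ldots,N_p$, so that the sub-patterns satisfy $A_{kl}=A\cap(B_k\x B_l)=\emptyset$ whenever $k>l$. It then suffices to show that every $(i,j)\in T$ has $\bkof(i)=\bkof(j)$, which says precisely that $T\subseteq\bigcup_{l=1}^{p}A_{ll}$.

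The first observation is that, since $T\subseteq A$ and $A$ is upper block triangular, each $(i,j)\in T$ lies in some $A_{kl}$ with $k\le l$; that is, $\bkof(i)\le\bkof(j)$ for every $(i,j)\in T$. Being a transversal, $T$ determines a bijection $j\mapsto i(j)$ of $1\:n$ onto itself, sending each column to the row it is paired with. I would then sum the inequality $\bkof(i(j))\le\bkof(j)$ over all $j\in1\:n$, obtaining
\[
  \sum_{j=1}^{n}\bkof\bigl(i(j)\bigr)\ \le\ \sum_{j=1}^{n}\bkof(j).
\]
Since every block accounts for $N_l$ rows and $N_l$ columns on which $\bkof$ equals $l$, both $\sum_i\bkof(i)$ and $\sum_j\bkof(j)$ equal $\sum_{l=1}^{p}lN_l$; and because $j\mapsto i(j)$ is a bijection, $\sum_j\bkof(i(j))=\sum_i\bkof(i)$. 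Hence the two sides of the display coincide, and a termwise inequality between finite sums with equal totals is forced to be a termwise equality: $\bkof(i(j))=\bkof(j)$ for each $j$, which is the claim.

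If an explicit inductive argument is preferred to this counting one, I would instead peel blocks from the corner. Block-column $B_1$ meets only the block-row $B_1$, since $A_{k1}=\emptyset$ for $k>1$; so the $N_1$ entries of $T$ that lie in the columns of $B_1$ all lie in rows of $B_1$, and as $|B_1|=N_1$ these are exactly the entries of $T$ lying in rows of $B_1$. Thus $T$ restricted to block $1$ is a transversal of $A_{11}$, deleting that block leaves a pattern still in BTF with blocks $B_2,\ldots,B_p$ of which the rest of $T$ is a transversal, and induction on $p$ finishes the argument.

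I do not expect a genuine obstacle: the only care needed is the bookkeeping that legitimises assuming $A$ is already in BTF (so that ``diagonal block'' and $\bkof$ are unambiguous), and the small logical point that the transversal must meet each block-row in exactly $N_l$ positions, which together with $\bkof(i)\le\bkof(j)$ along $T$ leaves no room for any entry to stray off the diagonal blocks.
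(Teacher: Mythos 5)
Your proposal is correct, and your primary argument is a genuinely different route from the paper's. The paper proves the theorem by induction on the number of blocks $p$, peeling off the \emph{last} block: since $A$ has no entries in the last $N_p$ rows outside the last block-column, the $N_p$ transversal entries in those rows fill the last diagonal block, and the remaining entries form a transversal of the leading $(n-N_p)\times(n-N_p)$ pattern, to which the inductive hypothesis applies. Your fallback induction is essentially this same proof, merely peeling the \emph{first} block-column instead of the last block-row, so it is not a new route. Your counting argument, however, is: from $T\subseteq A$ and upper BTF you get $\bkof(i)\le\bkof(j)$ on every $(i,j)\in T$, and since $T$ induces a bijection of rows onto columns, both sides of $\sum_j \bkof(i(j))\le\sum_j\bkof(j)$ equal $\sum_l l\,N_l$, forcing termwise equality. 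This is a one-shot global argument that avoids induction entirely and makes the ``no room to stray'' intuition quantitative; it is arguably shorter and cleaner. What the paper's (and your alternative) peeling argument buys in exchange is an explicit byproduct along the way: the restriction of $T$ to each diagonal block is exhibited directly as a transversal of that block, a fact the paper reuses (e.g.\ in Lemma~\ref{lm:blkfacts} and Theorem~\ref{th:Sess}); your counting proof also yields this afterwards by comparing cardinalities, but less visibly. Both arguments are sound, and your bookkeeping about assuming $A$ already permuted to BTF matches the paper's stated convention.
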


\begin{proof}
As said above, we may assume that $A$ is already in BTF.
The proof is by induction on $p$, the number of blocks. The result is trivially true in the base case $p=1$. 

Assume inductively the result holds for a BTF with $p-1$ blocks where $p>1$.
Let $A$, of size $n$, be in upper BTF with $p$ blocks, and $T$ be a transversal of it. With the notation above, its last, $p$th, block has size $N_p$. By the definition of BTF, $A$ has no elements to the left of the last block, that is in the last $N_p$ rows and first $n-N_p$ columns. Hence the $N_p$ elements of $T$ in these rows are contained in the last block, and must therefore occupy all $N_p$ columns of the last block. 

Thus $T'$, comprising the remaining $n-N_p$ elements of $T$, occupies the first $n-N_p$ rows and columns and therefore is a transversal of $A'$, the intersection of $A$ with these rows and columns. The first $p-1$ blocks of the given BTF form a BTF of $A'$. By the inductive hypothesis $T'$ is contained in the diagonal blocks of this BTF. This and the previous paragraph show $T$ is contained in the diagonal blocks of the whole BTF, proving the inductive step and completing the proof.
\qquad\end{proof}

\begin{lemma}\label{lm:blkfacts}~
  \begin{enumerate}[(i)]
  \item Any transversal of a Jacobian \spatt $S_0=S_0(c,d)$, see \rf{sparsS0}, is a HVT of $\Sigma$.
  \item For any BTF of a Jacobian \spatt $S_0$, the restriction to a block of any valid offset vector of $\Sigma$ (not necessarily the one from which $S_0$ is derived) is a valid offset vector of the corresponding diagonal block of $\Sigma$.
  \end{enumerate}
\end{lemma}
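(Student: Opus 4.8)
For part (i) the plan is simply to invoke Lemma~\ref{lm:cd_transversal}(i). If $T$ is a transversal of $S_0=S_0(c,d)$, then by the defining relation \rf{sparsS0} we have $d_j-c_i=\sigma_{ij}$ for every $(i,j)\in T$, while validity of the offsets $c,d$ supplies the inequalities \rf{cidjgeq} for all $i,j$; so that Lemma applies to this $T$ and shows $T$ is a HVT of $\Sigma$.

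Part (ii) is the substantive claim. Since offsets, patterns and block structure all transform consistently under permutation of rows and columns (cf.\ the discussion around \rf{rhokappa3}), I would first reduce to the case where $\Sigma$ and $S_0$ are already in the given BTF, with diagonal blocks $\Sigma_{ll}$ of sizes $N_l$ and index sets $B_l$ as in \rf{rcbkof1}. Fix a block $l$ and let $c^{(l)},d^{(l)}$ be the restrictions of $c,d$ to the rows and columns of block $l$. Two of the three conditions for $(c^{(l)},d^{(l)})$ to be a valid offset vector of $\Sigma_{ll}$ are inherited for free: the feasibility inequalities $d^{(l)}_j-c^{(l)}_i\ge(\Sigma_{ll})_{ij}$ form a sub-collection of \rf{cidjgeq} for the whole problem, and the sign conditions $c^{(l)}_i\ge 0$ form a sub-collection of \rf{cige0}. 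So the only work is to exhibit a finite transversal $T_l$ of $\Sigma_{ll}$ on which $d^{(l)}_j-c^{(l)}_i=(\Sigma_{ll})_{ij}$; once this is in hand, Lemma~\ref{lm:cd_transversal} upgrades it to ``equality on all HVTs of $\Sigma_{ll}$'', which together with the two automatic conditions is exactly the definition of a valid offset vector of $\Sigma_{ll}$.

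To construct $T_l$ I would start from any HVT $T$ of $\Sigma$. Since $T\subseteq\Sess\subseteq S_0$ (the inclusion chain stated just after \rf{sparsSess}), $T$ is a transversal of $S_0$, so by \thref{diagblocks} applied to $S_0$ with its given BTF, $T$ lies in the union of the diagonal blocks $\bigcup_k(B_k\x B_k)$. Hence the $N_l$ entries of $T$ in rows $B_l$ also lie in columns $B_l$, and being in distinct rows and columns they form a transversal $T_l=T\cap(B_l\x B_l)$ of the $N_l\x N_l$ block; as $T$ is a HVT of the structurally well-posed $\Sigma$ all its $\sigma_{ij}$ are finite, so $T_l$ is a finite transversal of $\Sigma_{ll}$. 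Finally, $c,d$ being valid --- hence general --- offsets, equality $d_j-c_i=\sigma_{ij}$ holds on all HVTs of $\Sigma$ by \rf{generalcd}, in particular on $T\supseteq T_l$; restricting to block $l$ gives the equality required on $T_l$. Applying Lemma~\ref{lm:cd_transversal} to $\Sigma_{ll}$, $c^{(l)},d^{(l)}$ and $T_l$ then completes the proof.

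I expect no genuine difficulty once \thref{diagblocks} and the inclusion $\Sess\subseteq S_0\subseteq S$ are available; the care required is in the bookkeeping --- verifying that the restriction of a global HVT to a diagonal block really is a transversal (and a finite one) of that block, and that ``valid offset vector of the sub-DAE $\Sigma_{ll}$'' unpacks into the three conditions above, two of which are inherited trivially. The one point worth spelling out is why $S_0$ must contain a HVT of $\Sigma$ at all, namely the essential-pattern inclusion, together with the fact that a HVT of a structurally well-posed DAE can have no $\ninf$ entry.
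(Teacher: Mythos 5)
Your proposal is correct and follows essentially the same route as the paper: part (i) is the same appeal to Lemma~\ref{lm:cd_transversal}(i), and part (ii) likewise reduces to the permuted case, takes a HVT of $\Sigma$, uses $\Sess\subseteq S_0$ plus Theorem~\ref{th:diagblocks} to confine it to the diagonal blocks, restricts to get a transversal of $\Sigma_{ll}$ on which equality holds, and notes that the inequalities and sign conditions are inherited. Your explicit remarks on finiteness and on the inclusion chain merely spell out steps the paper leaves implicit.
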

\begin{proof}
  Part (i) is a restatement of \lmref{cd_transversal}(i).
    
  For part (ii), again we may assume $S_0$ is already in BTF. Let the blocks of $S_0$ and $\Sigma$ be $(S_0)_{kl}$ and $\Sigma_{kl}$ for $k\in1\:p$, $l\in1\:p$. Let $(c,d)$ be a valid offset vector of $\Sigma$, and write it in block form $((\_c_1,\ldots,\_c_p),(\_d_1,\ldots,\_d_p))$ where each of the sub-vectors $\_c_l$ and $\_d_l$ has length $N_l$, following the block form of $\Sigma$. Fix $l\in 1\:p$. It is required to show $(\_c_l,\_d_l)$ is a valid offset vector of block $\Sigma_{ll}$, regarded as a \sigmx in its own right.
  
  Write $\Sigma_{ll}$, $(S_0)_{ll}$, $\_c_l$, $\_d_l$ as $\Sigma^*$, $S_0^*$, $c^*$, $d^*$ to simplify index notation.
  Now $d_j-c_i=\sij$ holds on some HVT of $\Sigma$. This HVT is a transversal of $S_0$ so by \thref{diagblocks} it is contained in the union of the diagonal blocks of $S_0$. Hence its intersection with $S_0^*$ is a transversal of $S_0^*$, a fortiori of $\Sigma^*$. Hence $d^*_j-c^*_i=\sigma^*_{ij}$ holds, for the offsets restricted to $\Sigma^*$, on a transversal of $\Sigma^*$. 
  Clearly also $d^*_j-c^*_i\ge\sigma^*_{ij}$ and $c^*_i\ge0$ hold.
  Thus $c^*,d^*$ satisfy the conditions \rf{generalcd} for being a valid offset vector of $\Sigma^*$.
\qquad\end{proof}

\subsection{Irreducibility}\label{ss:irredbltri}

This subsection summarises some classic graph theory definitions and results, see e.g.\ \cite{balakrishnan2013textbook,Coleman1986,Duff86a,Pothen1990}. 

A \spatt $A$ is {\sl structurally nonsingular} if it contains at least one transversal. A structurally nonsingular $A$ is {\sl irreducible}, if it cannot be permuted to a non-trivial BTF (one with $p>1$ blocks); otherwise it is reducible.

The {\sl bipartite graph} of $A$ is the undirected graph whose $2n$ vertices are labels for the $n$ rows and $n$ columns, and which has an edge between row $i$ and column $j$ whenever $(i,j)\in A$. In this context a transversal is also called a {\sl maximal matching}.

If $A$ is structurally nonsingular, one can choose (arbitrarily) some transversal $T$ to be {\sl distinguished}, thus specifying a row--column matching: row $i$ matches column $j$ if $(i,j)\in T$. For convenience, assume rows or columns or both are permuted to put $T$ on the main diagonal, so that row $i$ matches column $i$.
The {\sl graph of $A$ relative to $T$}, which we denote by $\graph(A,T)$, is then the digraph whose vertices are the indices $1\:n$ and which has an edge from $i$ to $j$ if $(i,j)\in A$.

\begin{theorem}\label{th:hallprops}~
  \begin{enumerate}[(a)]
  \item The following are equivalent:
  \begin{enumerate}[(i)]
  \item $A$ is structurally nonsingular.
  \item $A$ has the Hall Property: for $\range r1n$, any set of $r$ columns of $A$ contains elements of at least $r$ rows.
  \end{enumerate}

  \item The following are equivalent:
  \begin{enumerate}[(i)]
  \item $A$ is irreducible.
  \item $A$ has the Strong Hall Property: for $\range r1{n-1}$, any set of $r$ columns of $A$ contains elements of at least $r+1$ rows.
  \item Every element of $A$ is on a transversal, and the bipartite graph of $A$ is connected.
  \item For any transversal $T$, the digraph $G=\graph(A,T)$ is strongly connected.
  \end{enumerate}

  \end{enumerate}
\end{theorem}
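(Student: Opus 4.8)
The plan is to dispose of part (a) quickly and to prove part (b) by a cycle of implications. Part (a) is the classical marriage theorem for an $n\times n$ pattern. The implication (i)$\implies$(ii) is immediate: if $T$ is a transversal, then for any $r$ columns the $r$ entries that $T$ places in them occupy $r$ distinct rows. For (ii)$\implies$(i) I would use the standard augmenting-path argument --- given a partial transversal not yet saturating all columns, Hall's condition produces an alternating path that enlarges it --- and would cite \cite{Duff86a} rather than reproduce it. Two remarks make part (b) well posed and I would state them first: the Strong Hall Property implies the Hall Property (for $r\le n-1$ it gives $\ge r+1>r$ rows, and the case $r=n-1$ shows every row is met, so all $n$ columns meet all $n$ rows), so by part (a) each clause of (b) already entails structural nonsingularity; and $\graph(A,T)$ is defined only when $A$ is structurally nonsingular, so clause (iv) carries that hypothesis implicitly.

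For part (b) I would prove the cycle (i)$\implies$(ii)$\implies$(iii)$\implies$(iv)$\implies$(i) (irreducible $\to$ Strong Hall $\to$ every element on a transversal and bipartite graph connected $\to$ $\graph(A,T)$ strongly connected $\to$ irreducible). The two ``block-triangular'' links go by contraposition and are short. For (i)$\implies$(ii): if Strong Hall fails, some set $J$ of $r$ columns, $1\le r\le n-1$, has its entries confined to a set $I$ of rows with $|I|\le r$; by Hall $|I|\ge r$, so $|I|=r$; permuting $I$ and $J$ to the first $r$ rows and columns makes $A\cap(\overline I\times J)$ the below-diagonal block of a two-block square-block form whose diagonal blocks, of sizes $r$ and $n-r$, are both nonempty --- a nontrivial BTF, so $A$ is reducible. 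For (iv)$\implies$(i): if $A$ is reducible, permute it to a BTF with $p\ge2$ blocks; by \thref{diagblocks} the distinguished transversal lies in the diagonal blocks, and after within-block permutations (which preserve BTF) it is the main diagonal, so every edge of $\graph(A,T)$ runs from a block to a same-or-later block and no vertex of block $p$ can reach block $1$ --- contradicting strong connectedness.

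The substance is in the other two links. For (ii)$\implies$(iii): if the bipartite graph were disconnected, one component would have row-set $I$ and column-set $J$ with $|I|=|J|$ (any transversal, which exists by Hall, matches each side into its own component) and $1\le|J|\le n-1$ (a component meeting a row also meets a column, else that row is empty, and symmetrically), whence the $|J|$ columns $J$ meet only the $|J|$ rows $I$, violating Strong Hall; so it is connected. For ``every element on a transversal'': given $(i_0,j_0)\in A$, delete row $i_0$ and column $j_0$; in the remaining $(n-1)\times(n-1)$ pattern any $r$ columns meet at least $r+1$ rows of $A$ (Strong Hall), hence at least $r$ rows after the deletion, so by part (a) this pattern has a transversal $T'$, and $T'\cup\{(i_0,j_0)\}$ is a transversal of $A$ through $(i_0,j_0)$. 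For (iii)$\implies$(iv): fix any transversal $T$ and permute so that $T$ is the main diagonal, identifying row $i$ and column $i$ with vertex $i$ of $G=\graph(A,T)$. First I would show every edge $i\to j$ of $G$ lies on a cycle: $(i,j)$ lies on some transversal $T'$, which as a permutation of the indices has a cycle $i\to j\to\cdots\to i$ each of whose steps is an entry of $T'\subseteq A$, hence an edge of $G$; so $j$ reaches $i$ and $i,j$ are mutually reachable. Then any path of the connected bipartite graph between two rows $a,b$ reads off a chain $a\sim q_1\sim p_1\sim\cdots\sim b$ of mutually reachable vertices (each bipartite edge being an entry $(p,q)\in A$, hence $p$ and $q$ mutually reachable), so by transitivity all vertices are mutually reachable and $G$ is strongly connected; and $T$ was arbitrary.

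\textbf{Main obstacle.} The routine parts are the two BTF constructions and the Hall-type counting. The delicate step is (iii)$\implies$(iv): it requires moving fluently between three encodings of the same data --- the entry set $A$, the bipartite graph, and the digraph $\graph(A,T)$ --- and, above all, recognising that ``$(i,j)$ lies on a transversal'' is precisely what forces the edge $i\to j$ onto a \emph{cycle} of $G$, via the cycle decomposition of the corresponding permutation. Combining that observation with transitivity of mutual reachability along a bipartite path is the crux; the rest is bookkeeping.
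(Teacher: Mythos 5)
The paper does not prove this theorem at all: it is presented as a summary of classical results and dispatched with citations to \cite{balakrishnan2013textbook,Coleman1986,Duff86a,Pothen1990}, so there is no in-paper argument to match yours against. Your reconstruction is correct and is essentially the standard textbook proof: part (a) is Hall's marriage theorem; the cycle (i)$\Rightarrow$(ii)$\Rightarrow$(iii)$\Rightarrow$(iv)$\Rightarrow$(i) in part (b), with the two BTF links done by contraposition and the crux being that ``$(i,j)$ lies on a transversal $T'$'' forces the edge $i\to j$ onto a directed cycle of $\graph(A,T)$ via the cycle decomposition of the permutation $T'$ (read against the identity permutation $T$), is exactly how this equivalence is usually established. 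A nice touch, and the one point where you genuinely engage the paper's own machinery rather than the cited literature, is deriving (iv)$\Rightarrow$(i) from \thref{diagblocks}: reducibility plus that theorem pins every transversal inside the diagonal blocks, so every edge of $\graph(A,T)$ points weakly forward in block order and block $p$ cannot reach block $1$. Your preliminary remarks (Strong Hall $\Rightarrow$ Hall, and the implicit structural nonsingularity in clause (iv)) are also needed and correctly supply the Hall property you invoke inside (i)$\Rightarrow$(ii), since the paper's definition of irreducibility presupposes structural nonsingularity.

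One small caveat, which is a convention issue rather than a gap: for $n=1$ the Strong Hall condition is vacuous, so an empty $1\times1$ pattern satisfies (ii) while failing (i) under the paper's definitions; the equivalence, like its sources, tacitly assumes the nondegenerate situation (in the paper $A$ is always a structurally nonsingular pattern or a diagonal block of one, so this never arises). Your argument for the ``every element on a transversal'' half of (ii)$\Rightarrow$(iii) and the bipartite-connectivity half are both sound, including the observation that no row can be empty because a transversal exists.
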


If any diagonal block $A_{ll}$ of a BTF of $A$ is reducible (as a \spatt in its own right), this lets one refine the whole BTF of $A$, splitting the $l$th block into two or more smaller blocks. Repeating this till no further refinement is possible, we see there exists  an {\sl irreducible BTF} of $A$ for which each diagonal block is irreducible.
In fact, see \cite{Duff86a}, this is unique up to reordering of rows and columns within a block, and possible reordering of the blocks---both of which leave the emblem (see \rf{sbembdef}) unchanged.
Equivalently said,
\begin{theorem}\label{th:irredBTF}
  Each structurally nonsingular pattern $A$ has a unique {\sl irreducible emblem}, denoted $\sbemb(A)$, which is the emblem of every irreducible BTF of $A$.
\end{theorem}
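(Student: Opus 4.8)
The plan is to prove existence of an irreducible BTF by a finite-termination argument, and then to prove uniqueness by exhibiting an explicit invariant of $A$ alone---the strong components of the digraph $\graph(A,T)$ for an arbitrarily fixed transversal $T$---and showing that the block partition of \emph{every} irreducible BTF of $A$ must coincide with it.

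Existence is essentially the remark preceding the theorem, which I would spell out as follows. Start from the trivial one-block form. Whenever a diagonal block $A_{ll}$ of the current form is reducible as a pattern in its own right, replace block $l$ by a non-trivial BTF of $A_{ll}$; this yields a square-block form whose block partition of $1\:n$ is strictly finer. A partition of an $n$-element set can be properly refined at most $n-1$ times, so the process halts at a form all of whose diagonal blocks are irreducible.

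For uniqueness, fix any transversal $T$ of $A$ (one exists since $A$ is structurally nonsingular). Given \emph{any} BTF of $A$, \thref{diagblocks} says $T$ lies in the union of its diagonal blocks, so the restriction of $T$ to each diagonal block is a transversal of that block; permuting columns within blocks (which preserves BTF) we may assume $T$ lies on the main diagonal, and we form $G=\graph(A,T)$ with strong-component partition $\mathcal P$. Note that $\mathcal P$, expressed via the original row labels and their $T$-matched column labels, is determined by $A$ and $T$ alone. I would then argue two things. First, $\mathcal P$ refines the block partition of this BTF: every edge $i\to j$ of $G$ comes from $(i,j)\in A$, hence from a sub-pattern $A_{kl}$, and upper block-triangularity forces $k\le l$, so no cycle of $G$ can run between two distinct blocks and each strong component lies in a single block. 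Second, if the BTF is \emph{irreducible}, then each diagonal block $A_{ll}$ is irreducible, and the subgraph of $G$ generated by its indices is exactly $\graph(A_{ll},T_l)$ for the restricted transversal $T_l$, which by \thref{hallprops}(b) (irreducible $\iff$ this digraph is strongly connected) is therefore strongly connected; combined with the first point this makes each diagonal block a single strong component, so the block partition equals $\mathcal P$. Since for a BTF carrying $T$ on the diagonal the column-set of each block is the $T$-image of its row-set, the row-partition determines the column-partition and the pairing. Hence every irreducible BTF of $A$ has the one emblem $\sbemb(A)=\{(R_a,C_a)\}$, where the $R_a$ are the row-label sets of the strong components of $G$ and each $C_a$ is the $T$-image of $R_a$.

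The step I expect to be the main obstacle is the bookkeeping in the uniqueness part: keeping consistent the passage to ``$T$ on the diagonal'', the identification of $\graph(A_{ll},T_l)$ with the restriction of $G$ to block $l$'s indices, and the fact that for a BTF the column-set of each block is forced to equal the $T$-image of its row-set---so that agreement of the \emph{row} partitions of two irreducible BTFs already forces agreement of their emblems. One must also note that the single fixed $T$ is used for both BTFs being compared, which is legitimate because \thref{diagblocks} applies to any transversal and any BTF, and that $\mathcal P$ does not depend on the BTF at all.
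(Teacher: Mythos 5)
Your proof is correct, and it is genuinely more self-contained than what the paper itself offers: the paper only sketches existence by the same repeated-refinement remark and then delegates uniqueness entirely to the cited reference \cite{Duff86a}, presenting the theorem as a restatement of that fact. Your uniqueness argument---fix one transversal $T$, use \thref{diagblocks} to slide $T$ onto the diagonal of any given irreducible BTF, and show via \thref{hallprops}(b) that the diagonal blocks must coincide with the strong components of $\graph(A,T)$, a partition depending only on $A$ and $T$---is exactly the classical mechanism behind the cited result (the fine Dulmage--Mendelsohn decomposition), so in substance you are reproving the reference rather than taking a new road; but within this paper it is a different route, since it turns an appeal to the literature into a short argument using only results already stated (\thref{diagblocks}, \thref{hallprops}), and it makes explicit the invariant that forces the emblem to be unique, in the same spirit as the paper's later strong-component reasoning (the proof of \thref{Sess} and the quotient-graph argument leading to \thref{coarseblocks}). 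What the paper's route buys is brevity; what yours buys is a self-contained proof and a concrete description of $\sbemb(A)$ as the strong-component partition paired via $T$. Two small points to make explicit in a write-up: in the existence step, verify that splitting a reducible diagonal block again yields a BTF of all of $A$ (the new below-diagonal sub-patterns are contained in sub-patterns that were already empty), and note that each diagonal block is structurally nonsingular---its restriction of $T$ is a transversal by \thref{diagblocks}---so that ``reducible/irreducible'' is even defined for it and \thref{hallprops}(b) applies.
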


\medskip
The following key result ties any {\em Jacobian} \spatt to the {\em essential} \spatt.
\begin{theorem}\label{th:Sess}
  For a given \sigmx $\Sigma$, let $(c,d)$ be a valid offset vector, let $S_0=S_0(c,d)$ be its Jacobian \spatt \rf{sparsS0}.
  Let a permuted square-block form be chosen that puts $S_0$ into irreducible BTF.
  
   Then, modulo the permutation, the essential \spatt $\Sess$ of $\Sigma$, see \rf{sparsSess}, equals the union of the diagonal blocks. 
  Hence $S_0$ and $\Sess$ have the same irreducible emblem.
\end{theorem}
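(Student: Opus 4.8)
The plan is to prove the set equality $\Sess = D$ directly, where (after the permutation that puts $S_0$ into irreducible BTF) $D$ denotes the union of the diagonal blocks of that BTF, and then to read off the emblem statement. The two inclusions use, respectively, \thref{diagblocks} together with \rf{generalcd}, and \thref{hallprops}(b) together with \lmref{blkfacts}(i).

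First I would show $\Sess \subseteq D$. Take any $(i,j)\in\Sess$; by \rf{sparsSess} it lies on some HVT $T$ of $\Sigma$. By \rf{generalcd}, $d_j-c_i=\sij$ holds on every HVT, so every position of $T$ satisfies the defining condition of $S_0$ in \rf{sparsS0}; hence $T\subseteq S_0$, and since $T$ has one entry in each row and column it is a transversal of $S_0$. Applying \thref{diagblocks} with $A=S_0$ (in its chosen BTF) shows $T\subseteq D$, so in particular $(i,j)\in D$.

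Next I would show $D\subseteq\Sess$. Fix a diagonal block $(S_0)_{ll}$ and a position $(i,j)$ in it. Since the BTF is irreducible, $(S_0)_{ll}$ is an irreducible pattern, so by \thref{hallprops}(b)(iii) every element of it lies on a transversal of that block; choose such a transversal $T_{ll}\ni(i,j)$. For each $m\ne l$ choose an arbitrary transversal $T_{mm}$ of $(S_0)_{mm}$, which exists because that block, being irreducible, is structurally nonsingular. Because $S_0$ is in BTF, each $T_{mm}$ occupies exactly the rows and columns of $B_m$, and the $B_m$ partition $1\:n$ (for rows and for columns); hence $T=\bigcup_m T_{mm}$ has exactly one entry in each row and column and lies in $S_0$, i.e.\ $T$ is a transversal of $S_0$. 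By \lmref{blkfacts}(i), $T$ is a HVT of $\Sigma$, so $(i,j)\in T\subseteq\Sess$. Combining the two inclusions gives $\Sess=D$, modulo the permutation.

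For the final sentence: in the permuted coordinates $\Sess$ coincides with $D$, which is block-diagonal with diagonal blocks $(S_0)_{11},\ldots,(S_0)_{pp}$, each of which is irreducible; and $\Sess$ is structurally nonsingular since it contains a HVT. Hence this block-diagonal form is an irreducible BTF of $\Sess$, so by the uniqueness in \thref{irredBTF} its emblem equals $\sbemb(\Sess)$; by construction the same emblem is that of the chosen irreducible BTF of $S_0$, i.e.\ $\sbemb(S_0)$. Therefore $\sbemb(\Sess)=\sbemb(S_0)$. I expect no genuine obstacle here: the only step needing care is the completion argument in the second inclusion — that a transversal of a single irreducible diagonal block extends, via the BTF structure of $S_0$, to a transversal of the whole $S_0$ so that \lmref{blkfacts}(i) may be invoked — together with keeping the bookkeeping between the permuted pattern and the original one ("modulo the permutation") consistent throughout.
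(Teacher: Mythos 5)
Your proof is correct and follows essentially the same route as the paper's: the inclusion $\Sess\subseteq D$ via \thref{diagblocks} applied to $S_0$, the reverse inclusion by extending a transversal through $(i,j)$ in an irreducible diagonal block to a transversal of all of $S_0$ and invoking \lmref{blkfacts}(i), and the emblem statement via the block-diagonal form of $\Sess$ and the uniqueness in \thref{irredBTF}. The only difference is that you spell out a few steps the paper leaves implicit (e.g.\ that every HVT of $\Sigma$ lies in $S_0$, and why the union of block transversals is a transversal of $S_0$), which is fine.
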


\begin{proof}
  That is, let a $\~{~}$ denote the permuted versions of $S_0,\Sess$, etc. and let $(\~S_{kl})$ be the block decomposition of $\~S_0$:
  \[ \~S_{kl} = \~S_0\cap(B_k\x B_l), \]
  in the notation of \rf{rcbkof1}. It is required to show
  \begin{align}\label{eq:sdiag}
    \~S_\text{ess} = \~S_{11}\cup\ldots\cup \~S_{pp}.
  \end{align}
  As usual, we assume the matrices are already permuted, and drop the $\~{~}$ superscripts.
  
  Let the union on the right of \rf{sdiag} be called $\Sdiag$.
  That $\Sess\subseteq\Sdiag$ is immediate from \thref{diagblocks}.
  For the reverse, take any $(i,j)\in\Sdiag$. Then $(i,j)$ is in some diagonal block $S_{qq}$, which is itself irreducible by the definition of irreducible BTF.
  By \thref{hallprops}(b)(iii), $(i,j)$ is on some transversal of $S_{qq}$. Taking the union of this with arbitrary transversals (which must exist) of the remaining blocks gives a transversal $T$ of $S_0$. By \lmref{blkfacts}(i), $T$ is a HVT of $\Sigma$. Thus $(i,j)\in T\subseteq\Sess$. This shows $\Sdiag\subseteq\Sess$, completing the proof of \rf{sdiag}.
  
  For the last assertion of the theorem, by construction the emblem $\embE$ of the chosen BTF is the irreducible emblem of $S_0$.
  By \rf{sdiag} it puts $\Sess$ into block-triangular---indeed block-diagonal---form. 
  Since each $S_{ll}$ is irreducible in its own right, this form is an irreducible BTF of $\Sess$, so the irreducible emblem (defined in \thref{irredBTF}) of $\Sess$ is also $\embE$, as required.
\qquad\end{proof}

\begin{corollary}\label{co:Sess}~\\
  (i) Any irreducible BTF of $\Sess$ is block-diagonal, not merely block-triangular.\\
  (ii) All Jacobian \spatt{s} $S_0 = S_0(c,d)$ of $\Sigma$ have the same irreducible emblem, which equals that of $\Sess$.
\end{corollary}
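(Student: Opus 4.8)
The plan is to obtain both parts as short consequences of \thref{Sess} together with the uniqueness statement \thref{irredBTF}; no genuinely new argument is needed.

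For (ii) I would first note that valid offset vectors exist --- for instance the canonical one recalled in \ssrf{lap} --- so at least one Jacobian pattern $S_0 = S_0(c,d)$ is defined, and, since it contains every HVT of $\Sigma$, it is structurally nonsingular, so its irreducible emblem makes sense. Now \thref{Sess} asserts that \emph{every} such $S_0(c,d)$ has the same irreducible emblem as $\Sess$. Because $\Sess$ --- the union of all HVTs of $\Sigma$ --- is determined by $\Sigma$ alone and does not depend on $c,d$, it follows at once that all the patterns $S_0(c,d)$, as $(c,d)$ ranges over valid offset vectors, share one common irreducible emblem, namely $\sbemb(\Sess)$. That is exactly (ii).

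For (i), fix any valid $(c,d)$ and, as in \thref{Sess}, pick a permuted square-block form that puts $S_0 = S_0(c,d)$ into irreducible BTF; write $\embE = \eset{$(R_1,C_1),\ldots,(R_p,C_p)$}$ for its emblem. By \thref{Sess} we have $\embE = \sbemb(S_0) = \sbemb(\Sess)$, and, modulo the permutation, $\Sess$ equals the union of the diagonal blocks of this form; in label terms this says $\Sess\cap(R_k\x C_l)=\emptyset$ whenever $k\ne l$. The point to observe is that this emptiness condition is a property of the emblem $\embE$ by itself: the sub-patterns $\Sess\cap(R_k\x C_l)$ depend only on the label-sets $R_k$ and $C_l$, not on any ordering of the blocks or of the rows and columns within a block. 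Consequently every enumeration of $\embE$ is block-diagonal with respect to $\Sess$. Since, by \thref{irredBTF}, \emph{any} irreducible BTF of $\Sess$ has emblem $\sbemb(\Sess) = \embE$, every such BTF is block-diagonal, which is (i).

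The only step that deserves care is this last one: upgrading ``$\Sess$ is the union of the diagonal blocks in the \emph{particular} BTF produced by \thref{Sess}'' to ``the same holds for \emph{every} irreducible BTF of $\Sess$''. I expect this to be the main (though still minor) obstacle, and the way to handle it is exactly as above --- recognise that block-diagonality of a fixed pattern with respect to a square-block form is invariant under block-equivalence (reordering blocks, and permuting equations or variables inside a block), and then invoke the uniqueness of the irreducible emblem. Everything else is a one-line reading-off of \thref{Sess}.
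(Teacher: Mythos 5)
Your proposal is correct and is essentially the paper's own argument: the paper dismisses the corollary as immediate from the proof of \thref{Sess}, where the emblem of the chosen irreducible BTF of $S_0(c,d)$ is shown to put $\Sess$ into block-diagonal form and to coincide with the irreducible emblem of $\Sess$, and both parts then follow from the uniqueness of the irreducible emblem in \thref{irredBTF}. Your extra remark---that block-diagonality of $\Sess$ with respect to an emblem is invariant under block-equivalence, so it transfers to every irreducible BTF of $\Sess$---is precisely the implicit step the paper's ``immediate'' relies on, just spelled out.
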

\begin{proof}
Immediate from the proof of the preceding theorem.
\qquad\end{proof}

Since this emblem depends only on the \sigmx $\Sigma$ we call it {\sl the irreducible emblem of $\Sigma$}, or of the underlying DAE, and denote it by
\begin{align}\label{eq:emb}
  \sbemb(\Sigma).
\end{align}
Any square-block form having this emblem is called an {\sl irreducible fine-block form} of the DAE or an associated matrix (or pattern)---whether or not it puts the matrix or pattern into BTF.

\medskip
Thus, up to possible reordering, all Jacobian \spatt{s} $S_0(c,d)$ of $\Sigma$ have the same block sizes $N_1,\ldots,N_p$, and identical diagonal blocks, in their irreducible BTFs. We call these the {\sl fine blocks} of the DAE. 

The blocks of an irreducible BTF of $S$, the \spatt of $\Sigma$, are the {\sl coarse blocks}.
A \sigmx or DAE is {\sl fine-block} [respectively {\sl coarse-block}\/] {\sl irreducible} if it has only one fine [coarse] block.

Depending on the DAE, some ordering of blocks in the irreducible BTF of a Jacobian sparsity pattern may be arbitrary; some may be dictated by a specific choice of offsets $c$ and $d$; some may be inherent in the DAE.
This issue is clarified by the fine-block graph introduced in \scref{fineblockgraph}.

\noindent\parbox{\TW}{
\begin{theorem}\label{th:fineirred}~
  \begin{enumerate}[(i)]
  \item If there exist valid offsets $c,d$ whose Jacobian pattern $S_0(c,d)$ is irreducible, then the DAE is fine-block irreducible. 
  Conversely if  the DAE is fine-block irreducible then $S_0(c,d)$ is irreducible for all valid offset vectors $c,d$.
  \item If the DAE is fine-block irreducible then valid offsets $c,d$ are uniquely determined up to a constant. That is, $c_i = \hc_i+K$, $d_i = \hd_i+K$ for some $K\ge0$, where $(\hc,\hd)$ is the canonical offset vector.
  
  Equivalently said: if the DAE is fine-block irreducible then its only normalised (see \rf{normalcd}) offset vector is the canonical one.
  \end{enumerate}
\end{theorem}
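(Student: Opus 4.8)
The plan is to prove the two parts of \thref{fineirred} in turn, with part (i) feeding into part (ii). For part (i), the forward direction is almost immediate: if some valid $(c,d)$ has $S_0(c,d)$ irreducible, then any irreducible BTF of $S_0(c,d)$ has a single block, so the irreducible emblem $\sbemb(\Sigma)$---which by \coref{Sess}(ii) equals that of $S_0(c,d)$---is the trivial one-block emblem, i.e.\ the DAE is fine-block irreducible by definition. For the converse, suppose the DAE is fine-block irreducible, so $\sbemb(\Sigma)$ has $p=1$, and let $(c,d)$ be \emph{any} valid offset vector. Then the irreducible emblem of $S_0(c,d)$ is $\sbemb(\Sigma)$, again by \coref{Sess}(ii), hence also has one block; since $S_0(c,d)$ is structurally nonsingular (it contains an HVT of $\Sigma$ by \lmref{blkfacts}(i), and HVTs exist because the DAE is structurally well-posed), an irreducible BTF with one block means $S_0(c,d)$ cannot be permuted to a nontrivial BTF, i.e.\ it is irreducible.

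For part (ii), assume the DAE is fine-block irreducible, and let $(c,d)$ be any valid offset vector; I want to show $(c,d) = (\hc + K\mathbf 1,\ \hd + K\mathbf 1)$ for some $K\ge 0$, where $(\hc,\hd)$ is the canonical offset vector. By part (i), $S_0 = S_0(c,d)$ is irreducible. The key tool is \thref{hallprops}(b)(iv): pick any HVT $T$ of $\Sigma$ (which lies in $S_0$, and is in fact a transversal of $S_0$ since $|T| = n$), and permute so that $T$ sits on the main diagonal; then $G = \graph(S_0, T)$ is strongly connected. Now exploit the equality constraints: for $(i,j)\in S_0$ we have $d_j - c_i = \sij$, i.e.\ along any edge $i\to j$ of $G$ the difference $d_j - c_i$ is pinned to the fixed quantity $\sij$. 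Travelling along a path in $G$ from vertex $i_0$ to vertex $i_m$, the telescoping sum shows that $d_{i_m}$, hence $c_{i_m}$ (using the diagonal entry $d_{i_m} - c_{i_m} = \sigma_{i_m i_m}$... more precisely using that $(i_m,i_m)\in T\subseteq S_0$ after the permutation), is determined by $c_{i_0}$ plus a constant that does not depend on the particular valid offset vector. Since $G$ is strongly connected, every vertex is reachable from a fixed base vertex, so the entire vector $(c,d)$ is determined by the single scalar $c_{i_0}$; comparing with the canonical $(\hc,\hd)$, which is itself a valid offset vector and hence obeys the same relations, gives $c_i = \hc_i + K$ and $d_j = \hd_j + K$ with $K = c_{i_0} - \hc_{i_0}$. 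Minimality of the canonical vector (\rf{cige0,generalcd}) forces $K \ge 0$; and the ``equivalently said'' reformulation is then immediate since $\min_i \hc_i = 0$ already (the canonical vector is normalised), so the only shift $K$ with $\min_i(\hc_i + K) = 0$ is $K = 0$.

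The main obstacle is the bookkeeping in part (ii): one must be careful that the path-telescoping argument genuinely produces a constant independent of $(c,d)$, which requires that the \emph{same} graph structure $G = \graph(S_0,T)$ can be used for all valid $(c,d)$ simultaneously. This is not automatic, since different valid offsets give different patterns $S_0(c,d)$. The clean way around it is to work not with $S_0$ but with the essential pattern $\Sess$, which by \coref{Sess}(ii) is irreducible (it has the same one-block irreducible emblem), is independent of $(c,d)$, and satisfies $\Sess\subseteq S_0(c,d)$ for every valid $(c,d)$ with equality $d_j - c_i = \sij$ holding on all of $\Sess$ (since $\Sess$ is the union of HVTs, on each of which \rf{generalcd} gives equality). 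So I would instead take $T$ an HVT, permute it to the diagonal, and run the strong-connectivity argument on $G = \graph(\Sess, T)$, which is strongly connected by \thref{hallprops}(b)(iv); every edge of this fixed graph carries the relation $d_j - c_i = \sij$ for \emph{every} valid $(c,d)$, and the telescoping then legitimately yields a $(c,d)$-independent constant. The remaining care is purely routine: checking the direction of the offset shift sign and confirming $K\ge 0$ from canonicity.
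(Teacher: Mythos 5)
Your proposal is correct, and for part (i) it coincides with the paper's proof, which simply cites \coref{Sess}(ii). For part (ii) the core idea---telescoping the equalities $d_j-c_i=\sij$ along a path whose existence is guaranteed by irreducibility, then comparing with the canonical offsets---is the same as the paper's, but the graph you use differs. The paper works with the bipartite graph of $S_0(c,d)$ and its connectedness (\thref{hallprops}(b)(iii)), and secures the $(c,d)$-independence of the telescoped constant by noting that every element of the irreducible pattern $S_0(c,d)$ lies on a transversal, hence (by \lmref{blkfacts}(i) and \rf{generalcd}) on a HVT where equality also holds for the canonical offsets. You instead run the argument on the digraph $\graph(\Sess,T)$ for a fixed HVT $T$, strongly connected by \thref{hallprops}(b)(iv) since $\Sess$ is irreducible when the DAE is fine-block irreducible, and on whose edges equality holds for every valid offset vector because $\Sess$ is a union of HVTs. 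Your variant gives a cleaner treatment of exactly the subtlety you flagged---the graph and its edge relations are manifestly independent of $(c,d)$---at the small cost of first invoking \thref{Sess}/\coref{Sess} to get irreducibility of $\Sess$; the paper's version avoids mentioning $\Sess$ but handles the offset-independence step more implicitly. Both arguments are sound, and your derivation of $K\ge0$ from canonical minimality and of the ``normalised'' reformulation is fine.
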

}

\begin{proof}
  (i) Immediate from \coref{Sess} (ii).

  (ii) We use the bipartite graph characterisation in \thref{hallprops}.
  Suppose the DAE is fine-block irreducible and let valid offsets $c,d$ be given. Let $\mathcal{G}$ be the bipartite graph of $S_0(c,d)$ and denote its row and column vertices $\textsc{r}_1,\ldots,\textsc{r}_n$ and $\textsc{c}_1,\ldots,\textsc{c}_n$ respectively. 
  Choose some arbitrary $i_0\in 1\:n$; for convenience let it be some $i$ where the canonical offsets $\hc,\hd$ have $\hc_i=0$. 
  Let $c,d$ be any other valid offset vector and define
  $K = c_{i_0}$.

  Take an arbitrary $i\in 1\:n$. By \thref{hallprops}(b)(iii) there is a path in $\mathcal{G}$ from $\textsc{r}_{i_0}$ to $\textsc{r}_i$, of the form
  \[ \textsc{r}_{i_0} \to \textsc{c}_{j_1} \to \textsc{r}_{i_1} \to\ldots\to \textsc{c}_{j_m} \to \textsc{r}_{i_m} = \textsc{r}_i \]
  ($2m$ edges in total). Otherwise said, both $(i_{r-1},j_r)$ and $(i_r,j_r)$ are in $S_0(c,d)$ for $r=1\:m$.
  By the cited theorem, every element of $S_0(c,d)$ is on a transversal, so we have $2m$ equalities
  \[ d_{j_1}-c_{i_0}=\sigma_{i_0,j_1},\quad 
    d_{j_1}-c_{i_1}=\sigma_{i_1,j_1},\quad \ldots \]
  Subtracting these in pairs we have
  \begin{align*}
   c_{i_1} - c_{i_0} &= \sigma_{i_0,j_1} - \sigma_{i_1,j_1} &&= s_1, \text{ say}; \\
   c_{i_2} - c_{i_1} &= \sigma_{i_1,j_2} - \sigma_{i_2,j_2} &&= s_2; \\
   \ldots&\quad\ldots
  \end{align*}
  Adding these from 1 to $m$ gives
  \[ c_i - K = c_{i_m} - c_{i_0} = s_1+\cdots+s_m, \]
  where the RHS is independent of the particular offsets $c,d$ chosen.
  In particular taking $c,d$ to be $\hc,\hd$ (for which $K=0$) we see the RHS equals $\hc_i$.
  Thus we have shown there is $K$ (independent of $i$) such that $c_i = \hc_i+K$ for each $i\in 1\:n$, as required. 
  
  If $K\ne0$ then either $K<0$ and $c_i<0$ for some $i$ so $c$ is not valid; or $K>0$ and $c_i>0$ for all $i$ so $c$ is not normalised. This proves the final assertion.
\qquad\end{proof}

\subsection{Caveats}\label{ss:caveat}

Some comments are needed about sparsity analysis. First, if the DAE is structurally well-posed (\ssrf{lap}), an analysis based on $S_0$  can always  be carried out. However, it is only meaningful if the method is known to succeed: that is, if a consistent point where $\JS$ is nonsingular has been found. Second, $S_0$ is the set of positions where $\JS_{ij}$ is nonzero (a) structurally and (b) as far as \daesa can tell from the equations as written. 

As regards (a), $\JS$ varies along the solution, and a generically nonzero $\JS_{ij}$ may vanish at certain $t$ values. 
For (b), algebraic cancellation may make some $\JS_{ij}$ identically zero for reasons that \daesa does not detect. For instance, suppose equation $f_1$ is $(tx_1)'-tx_1'+x_2=0$, which simplifies to $x_1+x_2=0$. 
\daesa applies quasilinearity analysis based on an operator-overloading computation and does not do  symbolic simplification, 
so it finds $\sigma_{11}=1$ instead of the true $\sigma_{11}=0$. Such overestimation of elements of $\Sigma$ looks dangerous, but---provided a nonsingular $\JS$ is in due course found---can do no worse than cause some zero entries of $\JS$ to be treated as nonzero, and make numerical solution slightly less efficient than it could be: see \cite[\S5]{NedialkovPryce05a}.

\section{The set of normalised offsets and the fine-block graph}
\label{sc:fineblockgraph}

The aim of this section is to seek ways to characterise the set of all {\em normalised} offset vectors $(c,d)$ of the DAE. This also characterises the set of {\em valid} $(c,d)$, since the latter come from adding an arbitrary non-negative constant to the former.

This is not a purely theoretical exercise, since there exists at least one numerical method whose success depends on judiciously choosing non-canonical offsets instead of canonical ones.

``BTF'' and ``block'' refer to the irreducible upper fine-\bltri form unless said otherwise. Thus ``reducible'' means having more than one fine block.

\subsection{Local offsets}

Assume the DAE has been permuted to BTF as in the previous paragraph. {\sl Local offsets} of part $l$ ($\range l1p$) are offsets obtained by \SA of $\Sigma_{ll}$, i.e., by treating part $l$ as a free-standing DAE, with known driving terms coming from any coupling to other parts, which because of the BTF must have index in the range $(l{+}1)\:p$. We write a set of global offsets---{\em arbitrary} valid offsets of $\Sigma$ as a whole---as $c_1,\ldots,c_n$ and $d_1,\ldots,d_n$ in the order of the permuted matrices in \rf{blockSigmaJ}; and the {\sl canonical} local offsets as $\hc_1,\ldots,\hc_n$ and $\hd_1,\ldots,\hd_n$ in the same way. (E.g., $d_1,\ldots,d_{N_1}$ and $\hd_1,\ldots,\hd_{N_1}$ are the global and the canonical local offsets of the variables of part 1, and so on.)

\begin{theorem}\label{th:locgloboffsets}
  Consider an irreducible BTF of the Jacobian \spatt $S_0(c,d)$ for any valid (global) offsets $c,d$. Then the difference between global and local offsets is constant on each part, i.e.\ fine block. That is, there are non-negative integers $K_1,\ldots,K_p$ such that
  \[ c_i = \hc_i+K_l,\quad d_i = \hd_i+K_l \eqntxt{for $i\in B_l$, see \rf{rcbkof1}.} \]
  $K_l$ is called the {\sl lead-time} of block (or part) $l$.
\end{theorem}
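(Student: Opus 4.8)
The plan is to apply Theorem~\ref{th:fineirred}(ii) to each diagonal block separately, using Lemma~\ref{lm:blkfacts}(ii) to guarantee the restriction of the offsets to a block is a legitimate (valid) offset vector of that block's \sigmx.

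First I would fix a valid global offset vector $(c,d)$ of $\Sigma$ and a permuted square-block form putting the Jacobian pattern $S_0(c,d)$ into irreducible BTF, with blocks $\Sigma_{ll}$, $(S_0)_{ll}$ for $l\in1\:p$. By Lemma~\ref{lm:blkfacts}(ii), for each $l$ the restriction $(\_c_l,\_d_l)$ of $(c,d)$ to block $l$ is a valid offset vector of $\Sigma_{ll}$. Moreover, by the definition of irreducible BTF each diagonal block $(S_0)_{ll}$ is irreducible; so $\Sigma_{ll}$, taken as a \sigmx in its own right, has at least one valid offset vector whose Jacobian pattern is irreducible, whence by Theorem~\ref{th:fineirred}(i) the sub-DAE of part $l$ is fine-block irreducible. (Alternatively one may argue directly: $(S_0)_{ll}$ is precisely the Jacobian pattern of $\Sigma_{ll}$ relative to $(\_c_l,\_d_l)$ by \rf{sparsS0}, since the defining condition $d_j-c_i=\sij$ is unchanged by restriction, and it is irreducible.)

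Next, for each $l$ apply Theorem~\ref{th:fineirred}(ii) to the fine-block-irreducible sub-DAE of part $l$: its valid offset vector $(\_c_l,\_d_l)$ must equal its canonical offset vector plus a single non-negative constant. That canonical offset vector is by definition $(\hc_i,\hd_i)_{i\in B_l}$, the local offsets. Hence there is $K_l\ge0$ with $c_i=\hc_i+K_l$ and $d_i=\hd_i+K_l$ for all $i\in B_l$. Collecting over $l=1\:p$ gives the claimed statement, with the interpretation of $K_l$ as the lead-time of part $l$.

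I do not expect a serious obstacle; the content is essentially a reduction to the irreducible case already handled. The one point needing care is the \emph{bookkeeping} that the "canonical local offsets" $\hc,\hd$ of the theorem statement really are the canonical offset vector of $\Sigma_{ll}$ in the sense of Theorem~\ref{th:fineirred}(ii)---i.e. that the canonical offset of a sub-DAE coincides with what the paper has been calling the local offsets---and that the offset-restriction and block-decomposition operations commute as claimed. Both follow from the fact (already noted in \scref{permsqbkforms}) that offsets are attached to rows and columns and carried along under permutations and restrictions, so this is routine rather than delicate.
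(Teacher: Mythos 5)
Your proposal is correct and follows essentially the same route as the paper's own proof: restrict the global offsets to each diagonal block via Lemma~\ref{lm:blkfacts}(ii), note that the block is irreducible so Theorem~\ref{th:fineirred}(ii) applies, and identify the block's canonical offsets with the local offsets. Your extra care in checking that $(S_0)_{ll}$ is exactly the Jacobian pattern of the restricted offsets (so the block is indeed fine-block irreducible) just makes explicit a step the paper treats as immediate from the definition of irreducible BTF.
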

\begin{proof}
  By the definition of irreducible BTF, $\Sigma_{ll}$ is irreducible as a \sigmx in its own right.
  By definition the local offsets, restricted to block $l$, are the canonical offsets of $\Sigma_{ll}$.
  By \lmref{blkfacts} (ii) the global offsets, restricted to block $l$, are valid offsets of $\Sigma_{ll}$.
  By \thref{fineirred} (ii) they differ by a block-dependent constant $\ge0$ from the local offsets. This constant is $K_l$.  
\qquad\end{proof}

\subsection{The block inequalities}

Assume $\Sigma$ is in some irreducible fine-block form (see below \rf{emb}) with the blocks numbered $l=1\:p$.
This may or may not put $\Sigma$, or some Jacobian \spatt $S_0(c,d)$, into BTF.
Our aim is to convert the basic inequalities $d_j-c_i\ge\sij$ between $n$ unknowns to an equivalent set between $p$ unknowns that are identified with the $p$ blocks.
In many cases $p\ll n$ so this makes it considerably easier to study the set of solutions.

\smallskip
For any general offset vector $(c,d)$ (see \rf{generalcd}), either of $c$ or $d$ uniquely determines the other by the equation $d_j-c_i=\sij$ on a HVT, and this is independent of the HVT chosen.
In what follows it is convenient to speak of ``an offset vector $c$'', with the other half $d$ assumed to be defined in this way.
By \thref{locgloboffsets} the difference between $c$ and the (unique) canonical {\em local} offset vector $\hc=(\hc_1,\ldots,\hc_n)$ is constant on each block. That is, there exists a vector $K=(K_1,\ldots,K_p)$, depending only on $c$, such that $c_i-\hc_i = K_l$ whenever $i$ is in the $l$th block $B_l$, $l=1\:p$, see \rf{rcbkof1}. Equivalently
\begin{align}
  c_i &= K_l + \hc_i &\text{for $i\in B_l$, $l=1\:p$}. \label{eq:anchorci2}
\shortintertext{and correspondingly}
  d_j &= K_l + \hd_j &\text{for $j\in B_l$, $l=1\:p$}. \label{eq:anchorci2b}
\end{align}
We call $K_l$ the {\sl lead time}%
\footnote{As with ``offset'', the name comes from a feature of the method for solving a DAE in Taylor series.} 
of the $l$th block for this vector $c$ (and this DAE), and $K$ the {\sl lead time vector} of $c$.

Since the canonical local offsets by definition are normalised within each block, we can choose an ``anchor'' index $i_l\in B_l$ for each $l$, such that $\hc_{i_l} = 0$.
Then \rf{anchorci2} gives $K_l=c_{i_l}$, so that $K$ can also be regarded as the sub-vector of $c$ defined by
\begin{align}
  K = (c_{i_1}, c_{i_2}, \ldots, c_{i_p}) .\label{eq:anchorci1}
\end{align}

Now let $\sij$ be an arbitrary finite entry of $\Sigma$ off the block diagonal, i.e., $i$ and $j$ belong to different blocks, say blocks $k$ and $l$. 
Putting \rf{anchorci2,anchorci2b} into \rf{cidjgeq} gives
\begin{align*}
  (K_l + \hd_j) - (K_k + \hc_i) \ge \sij.
\end{align*}
Rearranging, we have reduced \rf{generalcd} to a set of inequalities in the unknowns $K_l$\,:
\begin{align}
  K_l - K_k &\ge \sij - \hd_j + \hc_i. \label{eq:reducedineqs2}
\end{align}
Here $i\in B_k$, $j\in B_l$, and $(i,j)$ is in the \spatt $S$ of $\Sigma$, see \rf{sparsS}.
That is, $(i,j)\in S_{kl}$ in the notation of \rf{patternperm}.

Of all the instances of \rf{reducedineqs2} that may exist for a given $(k,l)$, all are redundant except the one with the largest value on the right side.
Hence define, for $k,l\in1\:p$,
\begin{align} \label{eq:reducedineqs3}
  W_{kl} = \max\set{\sij - \hd_j + \hc_i}
  {$(i,j)\in S_{kl}$}.
\end{align}
Then the total set of inequalities \rf{reducedineqs2} has been reduced to
\begin{align}\label{eq:Kineqs}
  K_l-K_k \ge W_{kl}, \qquad\text{equivalently}\qquad
    K_l \ge  K_k+W_{kl}
\end{align}
for $k,l\in1\:p$, forming the {\sl block inequalities of $\Sigma$}, or of the DAE.

\smallskip
Since every finite entry within the block-diagonal---that is, in $\Sess$---is on a HVT, all terms on the right of \rf{reducedineqs3} are zero when $k=l$, giving the trivial relation $0\ge0$.
If $S_{kl}=\emptyset$, it is again trivial since $W_{kl}=\ninf$.
Thus in \rf{Kineqs} we only need consider $(k,l)$ for which  $k\ne l$ and $S_{kl}$ is nonempty.
Equivalently, the only nontrivial contributions to \rf{Kineqs} come from finite $\sij$ outside the block diagonal, so:
\bc The block inequalities are determined by $S\setminus\Sess$. \ec

The lead times $K_l$ are to fine blocks somewhat as offsets $c_i$ are to individual equations%
\footnote{One can recast \rf{cidjgeq} to look like \rf{Kineqs}. First permute the columns of $\Sigma$ to put a HVT on the main diagonal. Then a valid offset has $d_j=c_j+\sigma_{jj}$. Using this convert \rf{cidjgeq} to $c_j-c_i\geq w_{ij}$ where $w_{ij}=\sij-\sigma_{jj}$.}
, so we use similar terminology:
\bc A solution $K$ of \rf{Kineqs} is
  {\sl valid} if all $K_l\ge0$,
  {\sl normalised} if $\min_l K_l = 0$, and\\
  {\sl canonical} if it is the elementwise smallest valid solution.
\ec
That the canonical solution exists is proved in the same way as that the canonical offsets exist.
\begin{theorem}\label{th:psiprops}
  Let $\psi:\Z^n\to\Z^p$ be the map $c\mapsto K$ defined by \rf{anchorci1}. Then
  \begin{enumerate}[(i)]
  \item $\psi$ is a bijection of the set $\mathcal{C}$ of general offset vectors $c$ onto the set $\mathcal{K}$ of solutions $K$ of \rf{Kineqs}.
  \item Such a $c$ is valid iff $K=\psi(c)$ is valid.
  \item Such a $c$ is normalised iff $K=\psi(c)$ is normalised.
  \end{enumerate}
\end{theorem}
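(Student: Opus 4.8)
The plan is to establish the three parts in order, using the correspondence \rf{anchorci2,anchorci2b} between an offset vector $c$ and its lead-time vector $K$ as the backbone. For part (i), I would show $\psi$ is well-defined and injective, then surjective. Well-definedness is essentially \thref{locgloboffsets} (or rather the discussion following \rf{emb} applied to an arbitrary irreducible fine-block form): for any general offset $c$ the difference $c_i-\hc_i$ is constant on each block $B_l$, so setting $K_l=c_{i_l}$ for the anchor index $i_l$ gives a well-defined $K\in\Z^p$; the derivation of \rf{reducedineqs2} from \rf{cidjgeq}, followed by taking the max over $S_{kl}$ to get \rf{reducedineqs3}, shows this $K$ satisfies the block inequalities \rf{Kineqs}, so $\psi(\mathcal{C})\subseteq\mathcal{K}$. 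Injectivity is immediate: from $K$ one recovers $c_i = K_{\bkof(i)} + \hc_i$ for all $i$, and $d$ is then determined from $c$ via the equation $d_j-c_i=\sij$ on a HVT; so $c$ is uniquely determined by $K$.

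For surjectivity, given any $K\in\mathcal{K}$ I would \emph{define} $c$ by $c_i = K_{\bkof(i)}+\hc_i$ and $d_j = K_{\bkof(j)}+\hd_j$, and check that $(c,d)$ is a general offset vector, i.e.\ satisfies \rf{generalcd}. The block-diagonal inequalities $d_j-c_i\ge\sij$ with equality on a HVT hold because, within block $l$, $(\hc,\hd)$ restricted to $B_l$ is by definition the canonical offset of the irreducible block $\Sigma_{ll}$, hence a valid offset of it, so it already satisfies the full set of Hall-property inequalities for that block and achieves equality on a transversal of $\Sigma_{ll}$; and the union of such transversals over all blocks is a HVT of $\Sigma$ by \lmref{blkfacts}(i) (read in the direction: a transversal contained in $S_0$-diagonal blocks is an HVT — combined with \thref{diagblocks}, the diagonal-block transversals are exactly where HVTs live). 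The off-diagonal inequalities $d_j-c_i\ge\sij$ for $i\in B_k$, $j\in B_l$, $k\ne l$ reduce, after substituting the definitions of $c,d$, to $K_l-K_k\ge \sij-\hd_j+\hc_i$, which is implied by $K_l-K_k\ge W_{kl}$ since $W_{kl}$ is the max of the right-hand sides; and for positions $(i,j)\notin S$ there is nothing to check since $\sij=\ninf$. So $(c,d)\in\mathcal{C}$ and $\psi(c)=K$, giving surjectivity.

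Parts (ii) and (iii) are then quick consequences of the formula $c_i = K_{\bkof(i)}+\hc_i$. Since each $\hc_i\ge 0$ and, for each block $l$, there is an anchor index $i_l$ with $\hc_{i_l}=0$, we have $\min_{i\in B_l}c_i = K_l$, and hence $\min_i c_i = \min_l K_l$. Therefore $c_i\ge 0$ for all $i$ iff $K_l\ge 0$ for all $l$ (this is part (ii), matching the definition of valid $K$ below \rf{Kineqs}), and $\min_i c_i = 0$ iff $\min_l K_l = 0$ (this is part (iii)).

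The main obstacle is the surjectivity argument in part (i) — specifically verifying that the $c$ built from an arbitrary $K\in\mathcal{K}$ achieves equality on a \emph{common} HVT of the whole $\Sigma$, not just on separate transversals of the diagonal blocks. This needs the fact that a transversal assembled from transversals of the irreducible diagonal blocks of a Jacobian pattern is an HVT of $\Sigma$, which is exactly the content used in the proof of \thref{Sess} (via \lmref{blkfacts}(i) and \thref{diagblocks}); care is required because here $\Sigma$ is only assumed to be in \emph{some} irreducible fine-block form, which need not put $\Sigma$ or any particular $S_0(c,d)$ into BTF — but by \coref{Sess} all Jacobian patterns share this emblem, and one may pick the offsets so that the chosen fine-block form \emph{is} an irreducible BTF of that $S_0$, after which the cited results apply verbatim. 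Everything else is routine substitution into \rf{cidjgeq}.
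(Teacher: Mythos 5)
Your proposal is correct and takes essentially the same approach as the paper: its proof of (i) maps $\mathcal{C}$ into $\mathcal{K}$ via the construction of the $W_{kl}$ and then, "working backward", rebuilds $c$ from $K$ by \rf{anchorci2}---the surjectivity verification you spell out is exactly what the paper leaves implicit---and (ii)--(iii) are handled by the same observations that $K$ is a subvector of $c$ and that $\hc_i\ge0$ with $\hc_{i_l}=0$ on an anchor. The only stylistic difference is that your equality-on-a-HVT step closes most directly by applying \lmref{cd_transversal} to the constructed $(c,d)$ and the assembled block transversal, without the detour through \thref{Sess} and \coref{Sess}.
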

\begin{proof}  
(i) The above construction of the $W_{kl}$ shows that if $c$ is a solution of \rf{generalcd} then $K$ is a solution of \rf{Kineqs}, so $\psi$ maps $\mathcal{C}$ {\em into} $\mathcal{K}$.
Conversely for any solution $K$ of \rf{Kineqs} construct a vector $c$ by \rf{anchorci2}; then working backward we see $c$ is a solution of \rf{generalcd} and $\psi(c)=K$, showing $\psi$ maps $\mathcal{C}$ {\em onto} $\mathcal{K}$.
Since every element $c_i$ of $c$ is of the form \rf{anchorci2} for some $l$ we see $c$ is uniquely determined by $K$, so $\psi$ is one-to-one.

For (ii), if $c\ge0$ then $K$, being a subvector of $c$, is also $\ge0$.
Conversely, if $K\ge0$ then $c\ge0$ comes from \rf{anchorci2} and the fact that the canonical local offsets $\hc_i$ are $\ge0$.

For (iii), suppose $K=\psi(c)$ is normalised, which means it is valid and one of its components is zero. Since $K$ is a subvector of $c$, and $c$ is valid by (ii), this shows $c$ is normalised.

Conversely suppose $c\in\mathcal{C}$ is normalised, so $c_i=0$ for some $i$. 
Let $K=\psi(c)$, and let $l=\bkof(i)$, see \rf{rcbkof2}, so $0=c_i = K_l+\hc_i$ by \rf{anchorci2}.
Then $K_l$ and $\hc_i$ are both $\ge0$, by part (ii) and the definition of canonical offset. They sum to zero so must both be zero. In particular $K_l=0$, so $K$ is normalised.
\qquad\end{proof}

We continue with the assumptions and notation of the start of this subsection. Note that in the definition of the inequalities \rf{Kineqs} we considered the \spatt $S$ of $\Sigma$. Now we consider them in connection with a Jacobian \spatt.
\begin{theorem}\label{th:psiprops1}
  \begin{enumerate}[(i)]
  \item Let $(c,d)$ be a valid offset vector, with associated Jacobian 
    \spatt $S_0 = S_0(c,d)$ and lead time vector $K=\psi(c)$.
    Then equality holds in the $k,l$ relation \rf{Kineqs} iff the corresponding 
    $k,l$ sub-pattern, see \rf{patternperm}, of $S_0$ is nonempty:
    \[ K_l-K_k = W_{kl} \iff (S_0)_{kl} \ne \emptyset. \]
  \item  $S_0(c,d)$ is in irreducible fine (upper) BTF for the given 
    ordering of the blocks iff $K_l-K_k > W_{kl}$ whenever $k>l$.
  \end{enumerate}
\end{theorem}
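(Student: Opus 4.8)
The plan is to unwind both parts directly from the definition \rf{reducedineqs3} of $W_{kl}$ and the definition \rf{sparsS0} of the Jacobian pattern, using the substitutions \rf{anchorci2,anchorci2b} that express $c_i,d_j$ in terms of the lead times and the canonical local offsets. For part~(i), fix blocks $k$ and $l$. A position $(i,j)$ with $i\in B_k$, $j\in B_l$ lies in $(S_0)_{kl}$ exactly when $d_j-c_i=\sij$, which by \rf{anchorci2,anchorci2b} says $(K_l+\hd_j)-(K_k+\hc_i)=\sij$, i.e.\ $K_l-K_k = \sij-\hd_j+\hc_i$. Since every valid offset satisfies \rf{generalcd}, we always have $d_j-c_i\ge\sij$, so $K_l-K_k\ge \sij-\hd_j+\hc_i$ for every $(i,j)\in S_{kl}$; taking the max over such $(i,j)$ gives $K_l-K_k\ge W_{kl}$ (this is just \rf{Kineqs} again). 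Equality $K_l-K_k=W_{kl}$ therefore holds iff the maximising $(i,j)$ in \rf{reducedineqs3} actually achieves $K_l-K_k = \sij-\hd_j+\hc_i$, i.e.\ iff that $(i,j)$ lies in $(S_0)_{kl}$; and conversely any $(i,j)\in(S_0)_{kl}$ forces $\sij-\hd_j+\hc_i = K_l-K_k \ge W_{kl}$, which combined with $\le W_{kl}$ (from the definition of the max) gives equality. The only mild subtlety is handling the degenerate cases $S_{kl}=\emptyset$ (then $W_{kl}=\ninf$, $(S_0)_{kl}=\emptyset$, and $K_l-K_k=\ninf$ never holds, so both sides of the iff are false) and $k=l$ (excluded or trivial as already noted after \rf{Kineqs}); I would dispose of these in a sentence.

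For part~(ii), recall that $S_0$ is in irreducible fine upper BTF for the given block ordering iff the below-diagonal sub-patterns vanish, i.e.\ $(S_0)_{kl}=\emptyset$ whenever $k>l$ --- the diagonal blocks are automatically irreducible since we started from an irreducible fine-block form (below \rf{emb}), and an irreducible BTF is exactly a BTF whose diagonal blocks are irreducible. By part~(i), $(S_0)_{kl}=\emptyset$ is equivalent to $K_l-K_k\ne W_{kl}$, which together with the always-valid inequality $K_l-K_k\ge W_{kl}$ means $K_l-K_k>W_{kl}$. Quantifying over $k>l$ gives the claim.

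I do not expect a genuine obstacle here: the theorem is essentially a bookkeeping translation, and all the real content (existence of lead times, \thref{locgloboffsets}; the bijection $\psi$, \thref{psiprops}; the reduction of \rf{generalcd} to \rf{Kineqs}) is already in place. The one point to be careful about is the logical direction in part~(i): the definition \rf{reducedineqs3} takes a max, so ``$(S_0)_{kl}\ne\emptyset$'' must be shown equivalent to ``the max is attained at a point satisfying the Jacobian equation'', and I must check that an arbitrary element of $(S_0)_{kl}$ --- not just the maximiser --- is consistent, which it is because any such element gives $\sij-\hd_j+\hc_i=K_l-K_k$, and this common value cannot exceed $W_{kl}$ while also being $\le$ every term, so it equals $W_{kl}$. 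Stating that cleanly, plus the $\ninf$ edge cases, is the whole of the work.
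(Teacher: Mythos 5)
Your proposal is correct and follows essentially the same route as the paper: substitute $c_i=K_k+\hc_i$, $d_j=K_l+\hd_j$ into the Jacobian condition $d_j-c_i=\sij$, observe that $K_l-K_k\ge\sij-\hd_j+\hc_i$ holds for every $(i,j)\in S_{kl}$ so equality with the max $W_{kl}$ is attained exactly when some such $(i,j)$ lies in $(S_0)_{kl}$, and then read off part~(ii) from part~(i) via the definition of upper BTF. The paper presents this as a single chain of equivalences and treats (ii) as immediate; your extra care with the degenerate cases $S_{kl}=\emptyset$ and $k=l$ is harmless and does not change the argument.
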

\begin{proof}
  (i) Following the working leading to \thref{psiprops}, but assuming equality, we have
  \begin{align*}
    & K_l-K_k = W_{kl} \\
    &\iff K_l - K_k = \sij + \hc_i - \hd_j 
      &&\parbox[t]{.5\TW}{for some $i\in B_k$, $j\in B_l$
      because $\ge$ holds for all such $i,j$} \\
    &\iff (K_l+\hd_j) - (K_k+\hc_i) = \sij \\
    &\iff d_j - c_i = \sij = \sij - \sjj 
      &&\text{using \rf{anchorci2} twice} \\
    &\iff d_j - c_i = \sij 
      &&\text{using $d_j=c_j+\sjj$, as HVT is on diagonal} \\
    &\iff (i,j)\in S_0 
      &&\text{for some $i\in B_k$, $j\in B_l$, from above} \\
    &\iff \emptyset\ne S_0\cap(B_k\x B_l) 
      &&\text{which $=(S_0)_{kl}$ by definition},
  \end{align*}
  as required.
  
  (ii) follows at once from (i).
\qquad\end{proof}

\subsection{The fine-block graph (FBG)}
A convenient representation of the block inequalities \rf{Kineqs} is the {\sl fine-block graph}, FBG for short, of $\Sigma$ or of the DAE. 
Its vertices are the irreducible blocks, enumerated $1\:p$ in some arbitrary order. 
Each inequality \rf{Kineqs} defines an edge from $k$ to $l$, annotated with the weight $W_{kl}$. (A digraph with weights on the edges is often called a {\sl network}.)

Thus the $K$ values are attached to the vertices, and in going along an edge the $K$ value must increase by at least the weight on that edge.

\begin{example}\label{ex:FBGex1}
Consider this DAE of differentiation index 7 from the companion paper \cite{nedialkov2014qla}:
\begin{align}\label{eq:FBGex1}
\begin{split}
0 = A &= x'' + x\lambda      \\
0 = B &= y'' + y\lambda + (x')^3 -G \\
0 = C &= x^{2} + y^{2} - L^{2} \\[1ex] 
\end{split}
\begin{split}
0 = D & = u'' + u\mu     \\
0 = E &  = (v''')^{2}  + v\mu   -G   \\
0 = F &= u^{2} + v^{2} - (L+c\lambda)^{2}+\lambda''.
\end{split}
\end{align}
The state variables are $x$, $y$, $\lam$, $u$, $v$, and $\mu$;  $G$, $L$ and $c$ are constants. This artificial modification of a ``two-pendula'' problem was used in \cite{NedialkovPryce2012b}.

The \sigmx is shown in \fgref{FBGex1sigmas}, in the original row and column order and after permuting to irreducible blocks with a HVT on the main diagonal. 
\begin{figure}
\centering
{
\begin{align*}
\Sigma&=
\begin{blockarray}{r@{\hskip 6pt}rc@{\hskip 6pt}c@{\hskip 6pt}c@{\hskip 6pt}c@{\hskip 6pt}c@{\hskip 6pt}c@{\hskip 6pt}c@{\hskip 6pt}c}
       & &  x_1 &  x_2 &  x_3 &  x_4 &  x_5 &  x_6 &  
       \\[1ex]
       & &  x &   y &  \lam &  u &  v &  \mu & \s{c_i} \\
\begin{block}{r@{\hskip 6pt}r[@{\hskip 3pt}c@{\hskip 6pt}c@{\hskip 6pt}c@{\hskip 6pt}c@{\hskip 6pt}c@{\hskip 6pt}c@{\hskip -3pt}]@{\hskip 6pt}c@{\hskip 6pt}c} \\[-2ex]
f_1&  A\;\;\;\; & 2^\bullet  &  & 0  &  &   &    &   \;\;\s4\;\;\\
f_2&  B\;\;\;\; & 1  &2   & 0^\bullet   &  &  &    &   \;\;\s4\;\; \\ 
f_3&  C\;\;\;\; & 0 &0^\bullet  &    &  &  &    &   \;\;\s6\;\;\\
f_4&  D\;\;\;\; &    &   &    &2  &  &0^\bullet   &   \;\;\s0\;\;\\
f_5&  E\;\;\;\; &    &   &      &  &3^\bullet    & 0 & \;\;\s0\;\;\\
f_6&  F\;\;\;\; &    &   &   2 &0^\bullet  &0 &    &  \;\;\s2\;\;  \\[1ex]
\end{block}
 &\s{d_j}& \s6 &\s6&\s4  &\s2&\s3&\s0
  \end{blockarray},
\\
\~\Sigma&=
\begin{blockarray}{r@{\hskip 5pt}rc@{\hskip 5pt}c@{\hskip 5pt}c@{\hskip 5pt}c@{\hskip 5pt}c@{\hskip 5pt}c@{\hskip 5pt}c@{\hskip 5pt}c}
&&  x_5 &   x_6 &  x_4 &  x_1 &  x_2 &  x_3\\[1ex]
       &&  v &   \mu &  u &  x &  y &  \lam & \s{c_i} & \s{\hc_i}\\
\begin{block}{r@{\hskip 5pt}r[@{\hskip 3pt}c@{\hskip 5pt}c@{\hskip 5pt}c@{\hskip 5pt}c@{\hskip 5pt}c@{\hskip 5pt}c@{\hskip -3pt}]@{\hskip 6pt}c@{\hskip 5pt}c} \\[-2ex]
f_5 &E\;\;\;\; & 3  & \lbar{0}  &  & \dbar &   &    &   \;\;\s0\;\; &\s0 \\\cline{3-4}
f_4 &D\;\;\;\; &    & \lbar{0}   & \lbar{2} &  \dbar &  &    &   \;\;\s0\;\; &\s0\\ \cline{4-5}
f_6 &F\;\;\;\; & 0 &  &\lbar{0}    & \dbar &  &   2 & \;\;\s2\;\; &\s0\\ \cline{3-8}
f_1  &A\;\;\;\; &    &   &    &\lbar{2} & &0    & \;\;\s4\;\;  &\s0\\
f_3  &C\;\;\;\; &    &   &    &\lbar{0} & 0 &   &   \;\;\s6\;\; &\s2\\
f_2  &B\;\;\;\; &    &   &      & \lbar{1} &2   & 0 & \;\;\s4\;\; &\s0\\
[2pt]
\end{block}
&\s{d_j}& \s3 &\s0&\s2 &\s6&\s6&\s4\\
&\s{\hd_j}& \s3 &\s0&\s0 &\s2&\s2&\s0
\end{blockarray}
\end{align*}
}\vspace{-5ex}
  \caption[Signature matrix for example]{Signature matrix for \exref{FBGex1}. Above: original. Below: block form. \label{fg:FBGex1sigmas}}
\end{figure}

\begin{figure}
\centering
 \includegraphics[width=.5\TW]{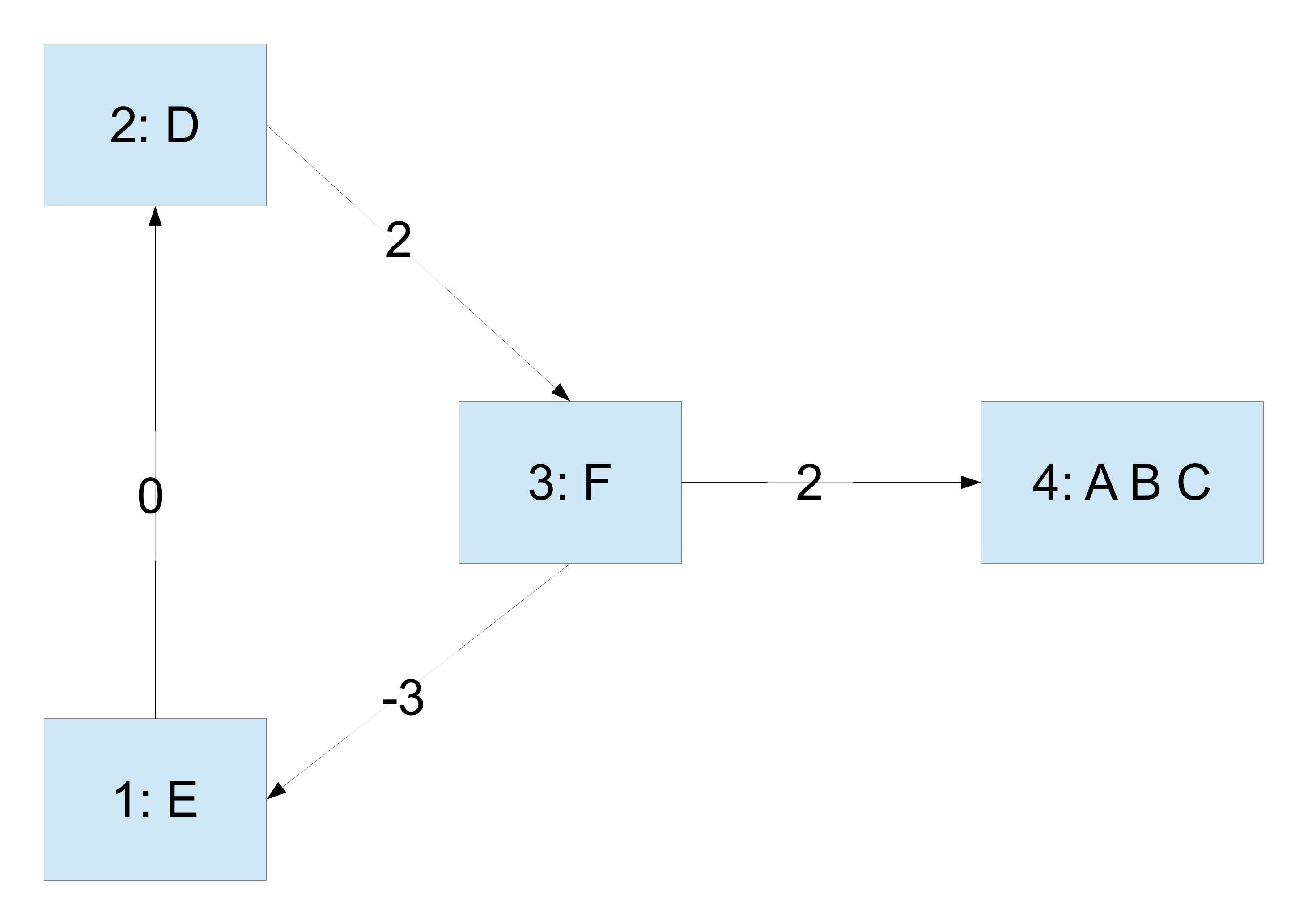}
  \caption[Example FBG.]{Fine-block graph of \rf{FBGex1}. \label{fg:FBGex1}}
\end{figure}

As noted below \rf{Kineqs}, to derive the block inequalities we only need consider $\~\Sigma$ entries outside the block diagonal, of which there are four. 
Applying \rf{reducedineqs3} gives the following inequalities. In this example each ``max'' in \rf{reducedineqs3} is over a set of only one element, meaning that there are no redundant inequalities to remove.
\begin{align}\label{eq:FBGex1Kineqs}
\begin{split}
  K_1 - K_3 &\ge -3, \\
  K_2 - K_1 &\ge 0, \\
  K_3 - K_2 &\ge 2, \\
  K_4 - K_3 &\ge 2.
\end{split}
\end{align}
\fgref{FBGex1} shows the equivalent FBG. Each vertex is marked by its block index and, for convenience, the names of the equations of the block.
\end{example}

\medskip
The inequalities \rf{Kineqs} and resulting fine-block graph are similar to a {\em Critical Path Method} (CPM) diagram%
\footnote{This uses the ``activity on arrow'' convention used by the classic PERT (Project Evaluation and Review Technique) method. Current CPM tools mostly use an ``activity on node'' convention.}
used in project management. 
In this: nodes (vertices) $l$ are project milestones; $K_l$ is the time at which the milestone is reached; an edge is a project activity; its weight is the minimum time needed to complete the activity; edges into a node are activities that must be completed for that milestone. One aims to minimise the time between project start and project end subject to these constraints, and one also wishes to find which activities have start-times critical for timely completion, and which have some {\sl float} (freedom in when they are done).

Loops (cycles), and edges of negative weight, are common in a FBG. They are less so in CPM diagrams, but can represent deadlines that must be met. For instance an edge looping back from node ``end'' to node ``start'' with weight $-7$ says $K_\text{start} - K_\text{end} \ge -7$. So $K_\text{end} \le K_\text{start} +7$---the project must take at most 7 time units.

In the context of Taylor series solution of a DAE, the FBG has a CPM-like practical meaning. 
E.g., a weight of 2 going from block A to block B means that for any $k$, the $(k+2)$-th Taylor coefficient of variables in A must have been found before one can find the $k$-th coefficient of variables in B.
They are thus key data for scheduling the solution process.

\begin{example}
\exref{FBGex1} illustrates the comment after \coref{Sess}, that the ordering of blocks may be partly dictated by a specific choice of offsets $c$ and $d$---equivalently, a choice of solution to the inequalities \rf{Kineqs}.

Writing $c = (c_A, c_B, c_C, c_D, c_E, c_F)$, i.e.\ with the original order of equations, and $K=(K_1,K_2,K_3,K_4)$, the map \rf{anchorci1} from $c$ to $K$ is 
\begin{align}\label{eq:FBGex1d}
 \psi: c \mapsto K=(c_E,c_D,c_F,c_A) .
\end{align}
and its inverse that recovers $c$ from $K$ is
\begin{align}\label{eq:FBGex1e}
 \psi\`: K \mapsto c = (K_4,K_4,K_4{+}2,K_2,K_1,K_3).
\end{align}

By inspection one finds three normalised solutions (``types X, Y and Z'') of the block inequalities \rf{FBGex1Kineqs}, each parameterised by an arbitrary integer $a\ge0$, as in \fgref{FBGex1offsets}. Every normalised solution is of this form.
They are shown with the corresponding $c_i$ (using \rf{FBGex1e}) and $d_j$ (using $d_j=c_i+\sij$ on a HVT), in the original order of equations and variables.
\begin{figure}
\centering
\newcommand\m[3]{\BAmulticolumn{#1}{#2}{#3}}
\renewcommand\b{\bullet}
\renewcommand\tabcolsep{.1em}
  \begin{tabular}{r|c|c|c|}
       & type X               & type Y               & type Z      \\\hline          
  $K$  &$(0, 0, 2, 4+a)$      &$(0, 0, 3, 5+a)$      &$(0, 1, 3, 5+a)$ \\
  $c$  &$(4+a,4+a,6+a,0,0,2)$ &$(5+a,5+a,7+a,0,0,3)$ &$(5+a,5+a,7+a,1,0,3)$ \\
  $d$  &$(6+a,6+a,4+a,2,3,0)$ &$(7+a,7+a,5+a,3,3,0)$ &$(7+a,7+a,5+a,3,3,1)$ 
  \end{tabular}

\vspace{2ex}
{\small
  \begin{tabular}{|c|c|}
       type X &type Y \\\hline          
$
\begin{blockarray}{@{\hskip 5pt}rc@{\hskip 5pt}c@{\hskip 5pt}c@{\hskip 5pt}c@{\hskip 5pt}c@{\hskip 5pt}c@{\hskip 5pt}c@{\hskip 5pt}c}
       &  v &   \;\;\mu &  u &  x &  y &  \lam & \;\;\;\s{c_i} \\
\begin{block}{@{\hskip 5pt}r[@{\hskip 3pt}c@{\hskip 5pt}c@{\hskip 5pt}c@{\hskip 5pt}c@{\hskip 5pt}c@{\hskip 5pt}c@{\hskip -3pt}]@{\hskip 6pt}c@{\hskip 5pt}c} \\[-2ex]
E\;\;\;\; & \x & \lbar{\x}  &  & \dbar &   &    &   \;\;\;\s0  \\\cline{2-3}
D\;\;\;\; &    & \lbar{\x}   & \lbar{\x} &  \dbar &  &    &   \;\;\;\s0 \\ \cline{3-4}
F\;\;\;\; &  &  &\lbar{\x}    & \dbar &  &   \b & \;\;\;\s2 \\ \cline{2-7}
A\;\;\;\; &    &   &    &\lbar{\x} & &\x    & \;\;\;\s4  \\
C\;\;\;\; &    &   &    &\lbar{\x} & \x &   &   \;\;\;\s6 \\
B\;\;\;\; &    &   &      & \dbar &\x   & \x & \;\;\;\s4 \\
[2pt]
\end{block}
\s{d_j}& \s3 &\;\;\s0 &\s2 &\s6 &\s6 &\;\s4\\
\end{blockarray}
$
&
$
\begin{blockarray}{@{\hskip 5pt}rc@{\hskip 5pt}c@{\hskip 5pt}c@{\hskip 5pt}c@{\hskip 5pt}c@{\hskip 5pt}c@{\hskip 5pt}c@{\hskip 5pt}c}
       &  v &   \;\;\mu &  u &  x &  y &  \lam & \;\;\;\s{c_i} \\
\begin{block}{@{\hskip 5pt}r[@{\hskip 3pt}c@{\hskip 5pt}c@{\hskip 5pt}c@{\hskip 5pt}c@{\hskip 5pt}c@{\hskip 5pt}c@{\hskip -3pt}]@{\hskip 6pt}c@{\hskip 5pt}c} \\[-2ex]
E\;\;\;\; & \x & \lbar{\x}  &  & \dbar &   &    &   \;\;\;\s0  \\\cline{2-3}
D\;\;\;\; &    & \lbar{\x}   & \dbar &  \dbar &  &    &   \;\;\;\s0 \\ \cline{3-4}
F\;\;\;\; & \x &  &\lbar{\x}    & \dbar &  &   \b & \;\;\;\s3 \\ \cline{2-7}
A\;\;\;\; &    &   &    &\lbar{\x} & &\x    &\;\;\;\s5  \\
C\;\;\;\; &    &   &    &\lbar{\x} & \x &   &   \;\;\;\s7 \\
B\;\;\;\; &    &   &      & \dbar & \x   & \x & \;\;\;\s5 \\
[2pt]
\end{block}
\s{d_j}& \s3 &\;\;\s0 &\s3 &\s7 &\s7 &\;\s5\\
\end{blockarray}
$
\end{tabular}
\\
\begin{tabular}{|c|}
       type Z \\\hline          
$
\begin{blockarray}{@{\hskip 5pt}rc@{\hskip 5pt}c@{\hskip 5pt}c@{\hskip 5pt}c@{\hskip 5pt}c@{\hskip 5pt}c@{\hskip 5pt}c@{\hskip 5pt}c}
       &  v &   \;\;\mu &  u &  x &  y &  \lam & \;\;\;\s{c_i} \\
\begin{block}{@{\hskip 5pt}r[@{\hskip 3pt}c@{\hskip 5pt}c@{\hskip 5pt}c@{\hskip 5pt}c@{\hskip 5pt}c@{\hskip 5pt}c@{\hskip -3pt}]@{\hskip 6pt}c@{\hskip 5pt}c} \\[-2ex]
E\;\;\;\; & \x & \dbar  &  & \dbar &   &    &   \;\;\;\s0  \\\cline{2-3}
D\;\;\;\; &    & \lbar{\x}   & \lbar{\x} & \dbar  &  &    &   \;\;\;\s1 \\ \cline{3-4}
F\;\;\;\; & \x &  &\lbar{\x}    & \dbar &  &   \b & \;\;\;\s3 \\ \cline{2-7}
A\;\;\;\; &    &   &    &\lbar{\x} & &\x    & \;\;\;\s5  \\
C\;\;\;\; &    &   &    &\lbar{\x} & \x &   &   \;\;\;\s7 \\
B\;\;\;\; &    &   &      & \dbar & \x   & \x & \;\;\;\s5 \\
[2pt]
\end{block}
\s{d_j}& \s3 &\;\;\s1 &\s3 &\s7 &\s7 &\;\s5\\
\end{blockarray}
$
\end{tabular}
}
\caption[Offsets and Jacobian patterns for example.]
{Above: general formula for vector $K$ and offsets, for \exref{FBGex1}, in original equation-and variable-order.\\
Below: corresponding Jacobian \spatt{s}, in order used for $\~\Sigma$ in \fgref{FBGex1sigmas}. In each case the entry $\bullet$ is present if $a=0$, absent if $a>0$.\label{fg:FBGex1offsets}}
\end{figure}
The corresponding Jacobian patterns $S_0(c,d)$ are also shown in \fgref{FBGex1offsets}.
Six slightly different patterns occur.
Type X with $a=0$ gives the canonical offsets, and the ordering was chosen to put this pattern into BTF.
Type Y is not in BTF but can be made so by putting blocks 1, 2, 3 in the order 3, 1, 2.
Similarly type Z is not in BTF but can be made so by them in the order 2, 3, 1.

Also, if $a=0$ the entry $\bullet$ is present forcing block 4 after block 3 in any BTF. If $a>0$ this entry is absent, and block 4 can be in any position among the other blocks.

Some block orders are incompatible with BTF: none of the 6 types is in BTF under the order 3, 2, 1, 4.
\end{example}

\medskip
Following terminology used in CPM, if $K$ is a particular solution vector, call an edge of the FBG {\sl critical} for $K$, or a {\sl $K$-critical edge}, if the corresponding inequality \rf{Kineqs} is satisfied as an equality.
A path in the graph is a {\sl critical path} for $K$, or a {\sl $K$-critical path}, if all its edges are critical for $K$.

\begin{theorem}\label{th:acyclic}
  For any solution vector $K$, the subgraph $\FBG(K)$ formed by the $K$-critical edges of the FBG is acyclic.
\end{theorem}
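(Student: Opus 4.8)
The plan is to show that the $K$-critical edges are exactly the nonempty off-diagonal blocks of a suitable Jacobian sparsity pattern, and that triangularity of the irreducible BTF of that pattern forbids any cycle among them. The naive attempt --- summing the equalities $K_l-K_k=W_{kl}$ around a hypothetical critical cycle, which telescopes to $0=\sum W_{kl}$ --- is not conclusive on its own, since individual weights $W_{kl}$ may be negative; a structural argument is needed instead.

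I would begin by reducing to a valid $K$. Whether an edge is $K$-critical depends only on the differences $K_l-K_k$, and these are unchanged under $K\mapsto K+M(1,\ldots,1)$; for $M$ large enough this shifted vector is again a solution of \rf{Kineqs}, is valid, and has the same critical subgraph. So assume $K$ valid. Then by \thref{psiprops}(ii) the offset vector $c=\psi^{-1}(K)$, together with its companion $d$, is valid, and we may form $S_0=S_0(c,d)$.

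Now \thref{psiprops1}(i) translates criticality into sparsity: for $k\ne l$, the FBG edge $k\to l$ is $K$-critical exactly when $(S_0)_{kl}\ne\emptyset$ (and the FBG has no self-loops, since the $k=l$ case of \rf{Kineqs} is the trivial $0\ge0$). On the other hand the $p$ vertices of the FBG are, by the definition of the fine blocks via \coref{Sess}, the diagonal blocks of an irreducible BTF of this same $S_0$; fix one such BTF and let $\pi(k)\in1\:p$ be the position that fine block $k$ occupies in it. Since that form is upper block-triangular, the sub-pattern in block-position $(s,t)$ with $s>t$ is empty; and $(S_0)_{kl}$ sits in block-position $(\pi(k),\pi(l))$, so $(S_0)_{kl}\ne\emptyset$ forces $\pi(k)\le\pi(l)$, hence $\pi(k)<\pi(l)$ when $k\ne l$. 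Thus every $K$-critical edge $k\to l$ strictly increases $\pi$, so a cycle $k_0\to k_1\to\ldots\to k_m=k_0$ of critical edges would give $\pi(k_0)<\pi(k_1)<\ldots<\pi(k_m)=\pi(k_0)$, a contradiction. Hence $\FBG(K)$ is acyclic.

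The only point needing care --- the main obstacle, minor though it is --- is keeping the two enumerations of the fine blocks straight: the FBG numbers them $1\:p$ in an arbitrary order, whereas the irreducible BTF of $S_0$ numbers the same blocks in the order that renders $S_0$ triangular, and one must track the bijection $\pi$ between them so that ``$(S_0)_{kl}$ lies below the block diagonal'' corresponds exactly to ``$\pi(k)>\pi(l)$''. A more hands-on alternative avoids $\pi$ entirely: along a critical cycle one splices chosen nonzero entries of the blocks $(S_0)_{k_{r-1}k_r}$ with paths through each irreducible diagonal block --- available from \thref{hallprops}(b)(iv) applied to $\graph(S_0,T)$, where $T$ is a HVT placed on the diagonal (it lies in the diagonal blocks by \thref{diagblocks}) --- to obtain a closed walk meeting two distinct fine blocks, contradicting that distinct fine blocks are distinct strong components; but the argument via $\pi$ is shorter and uses only \thref{psiprops}, \thref{psiprops1}(i), and the existence of an irreducible BTF of $S_0$.
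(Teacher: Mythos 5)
Your proof is correct and takes essentially the same route as the paper's: translate $K$-criticality of an edge $k\to l$ into $(S_0)_{kl}\ne\emptyset$ via \thref{psiprops1}(i), then use the upper-triangularity of an irreducible BTF of $S_0$ (whose diagonal blocks are the fine blocks, by \thref{Sess}) to get a block ordering that every critical edge strictly increases, ruling out cycles. Your preliminary shift reducing a general solution $K$ to a valid one is a small extra precaution that the paper's proof passes over silently, but otherwise the two arguments coincide.
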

\begin{proof}
  Let $(c,d)$ be the offset vector such that $\psi(c)=K$, and $S_0$ its Jacobian \spatt.
  $S_0$ can be put into irreducible BTF. By \thref{Sess}, its blocks are the (irreducible) blocks of $\Sess$ in some order. Otherwise said, there exists some ordering of the blocks of $\Sess$ that puts $S_0$ into BTF. We number the vertices of the FBG in this order.
  
  By \thref{psiprops1}, for every $K$-critical edge $k\to l$ in the FBG, the $k,l$ sub-pattern of $S_0$ is nonempty, hence by the definition of BTF it cannot be below the diagonal. That is, it must have $k<l$. Hence in this ordering, every $K$-critical path has a strictly decreasing sequence of vertex numbers. Thus it cannot have a cycle.
\qquad\end{proof}

A linear order on the vertices of an acyclic graph {\sl respects the graph order}, or is a {\sl topological sort} of the graph, if $u$ comes before $v$ whenever there is an edge in the graph from $u$ to $v$.
\begin{theorem}
  Let $(c,d)$ be a normalised offset vector with Jacobian \spatt $S_0=S_0(c,d)$ and lead-time vector $K=\psi(c)$.
  Then an ordering of the irreducible blocks puts $S_0$ into BTF iff it respects the order of the acyclic graph $\FBG(K)$.
\end{theorem}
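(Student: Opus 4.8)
The plan is to show the two conditions on an ordering $\pi$ of the fine blocks --- ``$\pi$ puts $S_0$ into BTF'' and ``$\pi$ respects the order of $\FBG(K)$'' --- are literally the same condition, by using \thref{psiprops1}(i) as a dictionary: for $k\ne l$, the sub-pattern $(S_0)_{kl}$ is nonempty exactly when equality holds in the $(k,l)$ block inequality \rf{Kineqs}, i.e.\ exactly when $k\to l$ is an edge of $\FBG(K)$. First I would note that by \thref{acyclic} the graph $\FBG(K)$ is acyclic, so ``respects the graph order'' (a topological sort) is a consistent and satisfiable notion; and $S_0$ admits an irreducible BTF, so orderings achieving BTF also exist. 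Hence the asserted ``iff'' is between two nonempty families of orderings.

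Then I would fix an ordering and, for notational ease, renumber the blocks so that it is the natural order $1,\ldots,p$ (block $k$ precedes block $l$ iff $k<l$), recalling that the FBG has no self-loops, so every edge $k\to l$ has $k\ne l$. For the forward implication I would argue: if this ordering puts $S_0$ into upper BTF and $k\to l$ is an edge of $\FBG(K)$, then $(S_0)_{kl}\ne\emptyset$ by \thref{psiprops1}(i); since $k\ne l$ and the block is nonempty, BTF forbids it from lying below the diagonal, so $k<l$. Thus the tail of every critical edge precedes its head, which is precisely ``$\pi$ respects the order of $\FBG(K)$''.

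For the converse I would argue contrapositively: if $\pi$ respects $\FBG(K)$ but some below-diagonal sub-pattern $(S_0)_{kl}$ with $k>l$ is nonempty, then \thref{psiprops1}(i) makes $k\to l$ a $K$-critical edge, forcing $k<l$ --- a contradiction. Hence all below-diagonal sub-patterns vanish and $S_0$ is in (upper) BTF; and since each diagonal block is one of the fine blocks, hence irreducible, this is automatically an irreducible fine BTF, matching the section's convention.

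I do not anticipate a genuine obstacle: all the substance is already packaged in \thref{psiprops1}(i) (the translation between emptiness of off-diagonal Jacobian sub-patterns and criticality of FBG edges) and \thref{acyclic} (which makes the topological-sort notion meaningful). The only points needing care are bookkeeping: pinning down the orientation convention so that ``upper'' BTF corresponds to ``tail before head'' in the FBG, and observing that FBG edges are necessarily off-diagonal, so the diagonal blocks --- which are always nonempty since $\Sess\subseteq S_0$ and fine blocks are structurally nonsingular --- never enter the argument.
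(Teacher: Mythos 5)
Your proposal is correct and takes essentially the same route as the paper: the paper's proof is the one-line remark that the theorem is a restatement of Theorem~\ref{th:psiprops1}(ii), which itself ``follows at once from (i)'' --- exactly the dictionary between nonempty off-diagonal sub-patterns $(S_0)_{kl}$ and $K$-critical edges that you spell out. Your extra bookkeeping (acyclicity via Theorem~\ref{th:acyclic}, irreducibility of the diagonal blocks) is sound but not needed beyond what the cited result already packages.
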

\begin{proof}
  This is a restatement of \thref{psiprops1}(ii).
\qquad\end{proof}

\medskip
\begin{theorem}\label{th:negsum}
  In a FBG, the sum of the weights $W_{kl}$ round any cycle is strictly $<0$.
\end{theorem}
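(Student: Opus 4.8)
The key is to combine two facts already established: the block inequalities $K_l - K_k \ge W_{kl}$ must have a valid (indeed normalised) solution, since one exists via $\psi$ applied to the canonical offsets (Theorem~\ref{th:psiprops}); and, by Theorem~\ref{th:acyclic}, the subgraph of $K$-critical edges for any solution $K$ is acyclic. I would argue as follows. Take any cycle $v_0 \to v_1 \to \cdots \to v_m = v_0$ in the FBG, and let $K$ be any solution of the block inequalities (e.g.\ the canonical one). Summing the inequalities $K_{v_r} - K_{v_{r-1}} \ge W_{v_{r-1}v_r}$ around the cycle, the left-hand side telescopes to $0$, giving $\sum_{r=1}^m W_{v_{r-1}v_r} \le 0$. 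So the weight sum around any cycle is $\le 0$; the remaining work is to rule out equality.

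If the weight sum were exactly $0$, then every inequality in the telescoped sum would have to hold with equality (since each term $K_{v_r} - K_{v_{r-1}} - W_{v_{r-1}v_r}$ is a nonnegative integer and they sum to zero). That means every edge of the cycle is $K$-critical, so the cycle lies entirely in $\FBG(K)$. But Theorem~\ref{th:acyclic} says $\FBG(K)$ is acyclic---contradiction. Hence the weight sum is strictly negative.

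I expect the main subtlety to be purely bookkeeping: making sure the telescoping is done correctly around a closed walk (the cycle may repeat vertices/edges, per the definitions in \ssref{graphdefs}), and being explicit that integrality of the $W_{kl}$ and $K_l$ makes ``nonnegative and summing to zero implies each is zero'' legitimate. One should also note at the outset that a solution $K$ exists at all---this is guaranteed because the DAE is assumed structurally well-posed, so the canonical offset vector exists and $\psi$ maps it to a solution of \eqref{eq:Kineqs}. No genuine obstacle arises; the result is essentially immediate from Theorem~\ref{th:acyclic} once one observes that a $0$-weight cycle would have to be critical.
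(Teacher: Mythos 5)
Your proposal is correct and follows essentially the same route as the paper: take a solution $K$ of the block inequalities (which exists), sum them around the cycle to get $\le 0$, and observe that equality would force every edge of the cycle to be $K$-critical, contradicting the acyclicity of $\FBG(K)$ from Theorem~\ref{th:acyclic}. The only cosmetic difference is that the paper derives criticality by summing all but one inequality and reversing the remaining one, whereas you argue via nonnegative slacks summing to zero---the same computation in different clothing (and integrality is not even needed for that step).
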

\begin{proof}
  Suppose the contrary. Then there is some sequence of vertices $l_0,l_1,\ldots,l_m$ with $l_m=l_0$, where each $l_i\in1\:p$ and $m\ge2$, such that
  \begin{align}\label{eq:cycle1}
  \sum_{i=1}^m W_{l_{i-1},\,l_i} = 0.
  \end{align}
  Take an arbitrary solution $K$ of the block-inequalities \rf{Kineqs}---we know there exists at least one. Then $K$ must satisfy them as equalities on this cycle. Namely, we have
  \begin{align}\label{eq:cycle2}
  K_{l_i} - K_{l_{i-1}} \ge W_{l_{i-1},\,l_i}, \quad i\in1\:m.
  \end{align}
  If we sum all but one of these we get the reverse of the remaining one. For example, summing from the second to the last gives
  \[ K_{l_m} - K_{l_1} \ge \sum_{i=2}^m W_{l_{i-1},\,l_i}, \]
  which, using \rf{cycle1}, reduces to $K_{l_1} - K_{l_0} \le W_{l_0,\,l_1}$, the reverse of the first of \rf{cycle2}, showing $K_{l_1} - K_{l_0} = W_{l_0,\,l_1}$.
  
  Hence all the edges in the cycle are $K$-critical and thus belong to $\FBG(K)$, a contradiction since the latter is acyclic by \thref{acyclic}.
\qquad\end{proof}

\subsection{Relation between DAE properties and FBG topology}
{\em Strong connectedness} and {\em strong components} of a digraph were defined in \ssrf{graphdefs}. Our aim in this subsection is to relate connectedness properties of the FBG to properties of the DAE and of its set of normalised offset vectors. It is convenient to start with a pure graph theory result.

Let $G=(V,E)$ be a digraph and $\phi$ a function from $V$ {\em onto} some set $W$. That is, its domain is all of $V$ and its range is all of $W$.
Then the sets
\begin{align}\label{eq:quotgraph1}
  V_w = \phi\`(\{w\}) = \set{v \in V}{$\phi(v)=w$},
\end{align}
for $w\in W$, are a partition of $V$---they are nonempty, pairwise disjoint, and cover $V$.

\begin{definition}
  The {\sl quotient graph} $G/\phi$ has vertex set $W$, and its edge set $F$ comprises all $\phi(u)\to\phi(v)$ such that $u\to v$ is an edge of $G$ and $\phi(u)\ne\phi(v)$.
\end{definition}

The quotient graph can be regarded as the graph that results from collapsing each set of vertices $V_w$ to a single point.

\begin{theorem}\label{th:quotgraph1}
  Suppose the subgraph $G_w$ of $G$ generated by $V_w$ in \rf{quotgraph1} is strongly connected for each $w$. Then
  \begin{enumerate}[(i)]
  \item Vertex $v$ is reachable from vertex $u$ in $G$ iff $\phi(v)$ is reachable from $\phi(u)$ in $G/\phi$.
  \item The strong components of $G$ are exactly the inverse images $\phi\`(C) = \set{v \in V}{$\phi(v) \in C$}$ of strong components $C$ of $G/\phi$.
  \end{enumerate}
\end{theorem}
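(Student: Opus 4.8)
The plan is to prove both parts essentially simultaneously, since part (ii) follows quickly once part (i) is established, together with the observation that the subgraphs $G_w$ absorb all cycles within a fibre. First I would prove the easy direction of (i): if $v$ is reachable from $u$ in $G$ via a path $u = v_0 \to v_1 \to \cdots \to v_m = v$, then applying $\phi$ to each vertex and deleting any consecutive repetitions (i.e.\ stretches where $\phi(v_{i-1}) = \phi(v_i)$, which correspond to steps inside a single fibre) yields a walk in $G/\phi$ from $\phi(u)$ to $\phi(v)$; by the definition of the quotient graph each retained step $\phi(v_{i-1}) \to \phi(v_i)$ with $\phi(v_{i-1}) \ne \phi(v_i)$ is genuinely an edge of $G/\phi$. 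If $\phi(u) = \phi(v)$ the empty walk does the job.

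For the converse direction of (i), suppose $\phi(v)$ is reachable from $\phi(u)$ in $G/\phi$, say by a path $w_0 = \phi(u) \to w_1 \to \cdots \to w_r = \phi(v)$. Each edge $w_{s-1} \to w_s$ of $G/\phi$ arises, by definition, from some edge $a_s \to b_s$ of $G$ with $\phi(a_s) = w_{s-1}$ and $\phi(b_s) = w_s$. Now I would stitch these together: within the fibre $V_{w_s}$, which is strongly connected by hypothesis, there is a path from $b_s$ to $a_{s+1}$ (both lie in $V_{w_s}$); similarly there is a path inside $V_{w_0}$ from $u$ to $a_1$, and inside $V_{w_r}$ from $b_r$ to $v$. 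Concatenating — path $u \rightsquigarrow a_1$, edge $a_1 \to b_1$, path $b_1 \rightsquigarrow a_2$, edge $a_2 \to b_2$, \ldots, edge $a_r \to b_r$, path $b_r \rightsquigarrow v$ — gives a path in $G$ from $u$ to $v$. The case $r = 0$ (so $\phi(u) = \phi(v)$) is handled directly by strong connectedness of the single fibre $V_{\phi(u)}$, which gives a path from $u$ to $v$ inside it.

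For part (ii), fix a strong component $C$ of $G/\phi$ and set $V_C = \phi\`(C)$. I claim $V_C$ is a strong component of $G$. It is strongly connected: for $u, v \in V_C$ we have $\phi(u), \phi(v) \in C$, so each is reachable from the other in $G/\phi$ (within $C$), hence by part (i) each of $u, v$ is reachable from the other in $G$. It is maximal: if $v \notin V_C$ were reachable from and could reach some $u \in V_C$, then by part (i) $\phi(v)$ would be reachable from and could reach $\phi(u)$ in $G/\phi$, forcing $\phi(v)$ into the strong component $C$, contradicting $v \notin V_C$. Finally these sets $\phi\`(C)$, as $C$ ranges over strong components of $G/\phi$, partition $V$ because the strong components of $G/\phi$ partition $W$ and $\phi$ is onto; and by the uniqueness of the strong-component decomposition (stated in \ssrf{graphdefs}) this partition must be \emph{the} strong-component partition of $G$, which is what (ii) asserts.

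The one point needing a little care — the main obstacle, such as it is — is the converse of (i): one must be sure the concatenated object is a genuine path, i.e.\ that consecutive pieces actually meet at a common vertex. This is exactly why the hypothesis that each $G_w$ is \emph{strongly connected} (not merely that the $V_w$ partition $V$) is used: it guarantees the intra-fibre connecting paths $b_s \rightsquigarrow a_{s+1}$ exist regardless of which representative edges $a_s \to b_s$ were chosen to witness the $G/\phi$-edges. Everything else is bookkeeping. I would also remark in passing that, since every cycle of $G$ either stays within one fibre (hence within the strongly connected $G_w$) or projects to a nontrivial closed walk in $G/\phi$, the hypothesis is precisely the natural one under which $G/\phi$ faithfully records the reachability structure of $G$ modulo the fibres.
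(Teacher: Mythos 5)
Your proposal is correct and follows essentially the same route as the paper: project the path and delete trivial steps for the forward direction of (i), stitch representative edges with intra-fibre paths (using strong connectedness of each $G_w$) for the converse, and deduce (ii) from (i) via mutual reachability. The only differences are cosmetic—you are slightly more explicit about connecting $u$ to $a_1$ and $b_r$ to $v$ inside the end fibres, and in (ii) you argue from components of $G/\phi$ to components of $G$ (with a partition-uniqueness remark) whereas the paper argues from a component of $G$ to its image, which amounts to the same thing.
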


\begin{proof}
\newcommand\To[1]{\stackrel{\pi_{#1}}{\longrightarrow}}
  (i) Suppose there is a path in $G$ from $u$ to $v$:
  \[ u = v_0\to v_1\to\ldots\to v_q = v. \]
  Define the ``path''
  \[ \phi(u) = \phi(v_0)\to \phi(v_1)\to\ldots\to \phi(v_q) = \phi(v), \]
  and delete from it all ``edges'' whose start and end are equal. This gives a genuine path (possibly of zero length) in $G/\phi$ from $\phi(u)$ to $\phi(v)$.
  
  Conversely suppose there is a path in $G/\phi$ from $\phi(u)$ to $\phi(v)$:
  \[ \phi(u) = w_0\to w_1\to\ldots\to w_q = \phi(v). \]
  By definition of the quotient, each $w_{i-1}\to w_i$ has the form $\phi(v_{i-1}^+) \to \phi(v_i^-)$ where $v_{i-1}^+ \to v_i^-$ is an edge of $G$.
  For $i=1\:q{-}1$, vertex $v_i^-$ need not equal $v_i^+$ but they both belong to $V_{w_i}$, which by hypothesis is strongly connected, as a subgraph of $G$.
  Hence we can find a path $\pi_i$ (possibly of zero length) from $v_i^-$ to $v_i^+$ in $V_{w_i}$ and hence in $G$. Glue these together to make a path
  \[ u = v_0^+ \to v_1^- \To1 v_1^+ \to v_2^- \To2 v_2^+ \ \ldots\
     \to v_{q-1}^- \To{q-1} v_{q-1}^+ \to v_q^- = v \]
  in $G$ from $u$ to $v$, as required.
  
  (ii) In any digraph the relation ``$v$ is mutually reachable from $u$'' (i.e., each is reachable from the other) is clearly an equivalence relation on the vertices, and the strong components are its equivalence classes.
  
  Let $C_0$ be any strong component of $G$, let $u_0$ be some fixed element of $C_0$, let $w_0 = \phi(u_0)$, and let $C$ be the strong component of $G/\phi$ containing $w_0$. For any vertex $u$ of $G$
  \begin{align*}
  u \in \phi\`(C) &\iff \phi(u)\in C \\
     &\iff \text{$\phi(u)$ is mutually reachable from $w_0 = \phi(u_0)$} \\
     &\iff \text{$u$ is mutually reachable from $u_0$, by part (i)} \\
     &\iff u \in C_0.
  \end{align*}
  Hence $C_0$ is the inverse image of strong component $C$ of $G/\phi$. Since $C_0$ was arbitrary this proves the result.
\qquad\end{proof}

%
%

\medskip
Now let $S$ be the sparsity pattern of $\Sigma$ and let $G=\graph(S,T)$ be the graph of $S$ relative to some chosen HVT $T$ (see \ssrf{findbltri}, \ssrf{irredbltri}).
Let $\phi$ be the map that takes each vertex $v$ of $G$ to the fine block it belongs to. (To be precise, regard $v$ as a row label, column label pair $(r,c)$ as matched by $T$. It maps to $l$ which labels the unique fine block $(R_l,C_l)$ such that $r\in R_l$, $c\in C_l$.)

Then the FBG---ignoring the edge weights---is the quotient graph $G/\phi$, because it has an edge from $k$ to $l$ iff $S_{kl}\ne\emptyset$, that is iff there is an edge in $G$ from some vertex $u$ in $(R_k,C_k)$ to some vertex $v$ in $(R_l,C_l)$.

The coarse blocks, by definition, are the strong components of $G$.
By the definition of $\phi$, its sets $V_w$ in \rf{quotgraph1} are exactly the fine blocks, so they are the strong components of $\graph(\Sess,T)$ and a fortiori strongly connected in $G$. 
Hence by the last theorem, the coarse blocks are the inverse images under $\phi$ of the strong components of the FBG.
Inverse image takes a fine block, regarded as a single vertex $w$ of the FBG, to the same fine block regarded as a set $V_w$ of vertices of $G$.
Hence we have proved

\begin{theorem}\label{th:coarseblocks}
  Each coarse block is the union of the fine blocks comprising a strong component of the FBG.
\end{theorem}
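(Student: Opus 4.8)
The scaffolding for this is already in place in the paragraphs preceding the statement: one has the digraph $G=\graph(S,T)$ (with $S$ the sparsity pattern of $\Sigma$, $T$ a fixed HVT, rows and columns permuted so $T$ lies on the diagonal, so $G$ has vertex set $1\:n$ and an edge $i\to j$ iff $(i,j)\in S$), the map $\phi$ sending each vertex $i$ to the label of the fine block containing $i$, the identification of the coarse blocks with the strong components of $G$, and the identification of the FBG (edge weights forgotten) with the quotient graph $G/\phi$. The plan is simply to feed this data into \thref{quotgraph1}(ii), after verifying its one hypothesis. So the proof is essentially routine verification; the step that carries genuine content is checking that the fibres of $\phi$ are strongly connected in $G$.

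First I would recall why the FBG and $G/\phi$ coincide edge-for-edge: an edge $k\to l$ with $k\ne l$ lies in $G/\phi$ iff $G$ has an edge from some vertex of block $k$ to some vertex of block $l$, i.e.\ iff $S_{kl}\ne\emptyset$; and by the discussion around \rf{Kineqs} this is exactly the condition for the block inequality to contribute a (non-trivial) edge $k\to l$ to the FBG. I would also note $\phi$ is onto the vertex set $\{1,\dots,p\}$ of the FBG, trivially. Then by definition the coarse blocks are the strong components of $G$, and the fibres $V_w=\phi\`(\{w\})$ in \rf{quotgraph1} are exactly the fine blocks regarded as sets of vertices of $G$.

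Next I would verify the hypothesis of \thref{quotgraph1}, namely that each $V_w$ generates a strongly connected subgraph of $G$. By \coref{Sess} the fine blocks are the diagonal blocks of an irreducible BTF of $\Sess$, and that BTF is in fact block-diagonal; each such block is irreducible as a pattern in its own right, so by \thref{hallprops}(b) the subgraph of $\graph(\Sess,T)$ generated by $V_w$ is strongly connected. Since $\Sess\subseteq S$, every edge of this subgraph is also an edge of $G$, so the subgraph of $G$ generated by $V_w$ contains a strongly connected spanning subgraph on the same vertex set and is therefore itself strongly connected. This ``strongly connected in $G$'' step is the one requiring a little care — one must be sure that enlarging the edge set from $\graph(\Sess,T)$ to $G$ cannot destroy strong connectedness of a fibre — but it is immediate, since adding edges to a strongly connected digraph keeps it strongly connected; it is, however, the precise place where $\Sess\subseteq S$ enters, and so is worth stating explicitly.

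Finally I would apply \thref{quotgraph1}(ii): the strong components of $G$ are exactly the inverse images $\phi\`(C)$ of the strong components $C$ of $G/\phi$, i.e.\ of the FBG. The left-hand side is the set of coarse blocks; on the right, a strong component $C$ of the FBG is a set of fine-block labels and $\phi\`(C)=\bigcup_{w\in C}V_w$ is the union of the corresponding fine blocks, viewed as a set of vertices of $G$. Matching the two descriptions gives precisely the claim that each coarse block is the union of the fine blocks forming one strong component of the FBG.
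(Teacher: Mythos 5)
Your proposal is correct and follows essentially the same route as the paper: identify the FBG (weights ignored) with the quotient graph $G/\phi$ of $G=\graph(S,T)$ under the map sending vertices to their fine blocks, check that each fibre is strongly connected in $G$ because the fine blocks are strongly connected in $\graph(\Sess,T)$ and $\Sess\subseteq S$, and then apply \thref{quotgraph1}(ii) together with the identification of coarse blocks as the strong components of $G$. The only difference is that you spell out, via \coref{Sess} and \thref{hallprops}(b), the step the paper dispatches with ``a fortiori'', which is a harmless elaboration rather than a change of method.
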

  
\medskip
It can be seen from \exref{FBGex1}, in particular \fgref{FBGex1offsets}, that the general normalised offset comes from combining a finite set---the $K$ vectors $(0,0,2,4)$, $(0,0,3,5)$, $(0,1,3,5)$ or the derived $c$ and $d$---with a parameterisation by an integer $a$, thus creating an infinite set of offsets.
The finite set is associated with a cycle in the FBG forming a strong component; the parameter is associated with an edge between two strong components.
The next result goes some way toward explaining this behaviour.

\begin{theorem}\label{th:cdcount}~
\begin{enumerate}[(i)]
  \item The set of normalised offset vectors has more than one member iff the FBG has more than one vertex, i.e.\ iff the DAE is fine-block reducible. 
  
  \item 
  The set of normalised offset vectors is infinite iff the FBG has more than one strong component, i.e.\ iff the DAE is coarse-block reducible.
\end{enumerate}
\end{theorem}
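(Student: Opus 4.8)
The plan is to push everything through the bijection $\psi$ of \thref{psiprops} and to argue entirely about the block inequalities \rf{Kineqs}. Since $\psi$ carries the set of general offsets onto the set of solutions $K$ of \rf{Kineqs}, carries normalised vectors to normalised vectors, and is one-to-one, the set of normalised offset vectors is in bijection with the set $\mathcal{N}$ of normalised solutions of \rf{Kineqs}; so it is enough to decide when $|\mathcal{N}|>1$ and when $\mathcal{N}$ is infinite. Recall also that ``fine-block reducible'' means $p>1$ (more than one FBG vertex) by definition, and ``coarse-block reducible'' means the FBG has more than one strong component by \thref{coarseblocks}; so those ``i.e.''\ clauses need no further work. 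With this reduction, the forward direction of (i) is immediate: if $p=1$ the only inequality in \rf{Kineqs} is the trivial $0\ge0$, so $\mathcal{N}=\{(0)\}$.

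For the reverse direction of (i) I must show $p\ge2$ forces $|\mathcal{N}|\ge2$. Let $\hat K$ denote the canonical solution of \rf{Kineqs} (its existence was noted above) and consider $\FBG(\hat K)$, which is acyclic by \thref{acyclic} and has all $p$ vertices. Choose a sink $m$ of $\FBG(\hat K)$ (one exists, the graph being finite, acyclic and nonempty). Increasing the $m$-th component of $\hat K$ by $1$ keeps every constraint out of $m$ satisfied, because $m$ being a sink of $\FBG(\hat K)$ means every such edge is non-critical, hence has integer slack at least $1$; and it only relaxes the constraints into $m$. So the resulting vector $\hat K'$ again solves \rf{Kineqs} and is again $\ge0$; moreover it has minimum entry $0$ --- hence lies in $\mathcal{N}$ and differs from $\hat K$ --- unless $m$ is the \emph{unique} index at which $\hat K$ vanishes. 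That last case cannot arise when $p\ge2$: by canonicity of $\hat K$, no \emph{source} $s$ of $\FBG(\hat K)$ can have $\hat K_s>0$, for lowering the $s$-th component by $1$ would (by the same slack argument, now applied to the constraints into $s$) produce a strictly smaller valid solution; hence if $m$ were the only vanishing index it would be the only source of $\FBG(\hat K)$ as well as its only sink, and then any other vertex $v$ would be reachable from $m$ and would also reach $m$ (trace in-edges back to the unique source, out-edges forward to the unique sink), giving a cycle through $v$ in the acyclic graph $\FBG(\hat K)$ --- a contradiction. I expect this ``unique zero'' situation to be the one genuinely subtle point of the proof, and the combination of acyclicity of $\FBG(\hat K)$ with the canonicity fact about sources is exactly what disposes of it.

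For the reverse direction of (ii), assume the FBG has at least two strong components. Its condensation is then a DAG with more than one vertex, so it has a sink strong component $\mathcal{S}$, one from which no FBG edge leaves; thus $\mathcal{S}$ is a nonempty proper subset of $\{1,\dots,p\}$. For each integer $c\ge0$ let $K^{(c)}$ be obtained from $\hat K$ by adding $c$ to $K_l$ for every $l\in\mathcal{S}$ and leaving every other component fixed. An inequality of \rf{Kineqs} coming from an edge with tail in $\mathcal{S}$ has head in $\mathcal{S}$ as well (nothing leaves $\mathcal{S}$), so it is unchanged; one with head in $\mathcal{S}$ and tail outside is only relaxed; the rest are untouched. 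Hence each $K^{(c)}$ solves \rf{Kineqs} and is $\ge0$. Normalising each $K^{(c)}$ (subtracting its minimum entry, which equals the fixed number $\min_{l\notin\mathcal{S}}\hat K_l$ once $c$ is large) yields members of $\mathcal{N}$ whose restrictions to $\mathcal{S}$ grow strictly with $c$, so they are pairwise distinct; thus $\mathcal{N}$ is infinite.

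Finally, for the forward direction of (ii), assume the FBG is strongly connected. Let $w$ be the least of its finitely many (and finite) edge weights $W_{kl}$, and set $M=(p-1)\max(0,-w)$. Given $K\in\mathcal{N}$, pick $l_0$ with $K_{l_0}=0$. For any vertex $l$, strong connectedness supplies a path from $l$ to $l_0$ with at most $p-1$ edges; summing the corresponding inequalities \rf{Kineqs} along it gives $0=K_{l_0}\ge K_l+(\text{a sum of at most }p-1\text{ of the }W_{kl})\ge K_l-M$, so $0\le K_l\le M$ for every $l$. Hence $\mathcal{N}\subseteq\{0,\dots,M\}^p$ is finite. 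Together with \thref{coarseblocks} for the ``i.e.''\ clauses, the four parts establish the theorem.
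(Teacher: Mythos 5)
Your argument is correct and follows essentially the same route as the paper: reduce to the normalised solutions of the block inequalities \rf{Kineqs} via the bijection $\psi$ of \thref{psiprops}, bump the lead time of a sink of the acyclic graph $\FBG(K)$ by $1$ for the reverse of (i), bound all $K_l$ by summing inequalities along paths for the forward direction of (ii), and bump a closed set of vertices by an arbitrary $a$ for the reverse of (ii) (your sink strong component is a special case of the paper's partition $V_1,V_2$ with no edge from $V_2$ to $V_1$), with \thref{coarseblocks} supplying the ``i.e.''\ clauses exactly as in the paper. The one place you diverge is the wrinkle in (i) when the bumped vector fails to be normalised: the paper simply subtracts $1$ from every entry (a shift preserves \rf{Kineqs}, and since the unique zero was at the sink $m$ the shifted vector is still valid, normalised, and differs from $K$ because $p\ge2$), and this works from \emph{any} normalised $K$, whereas you insist on the canonical $\hat K$ and rule the case out by showing canonicity forces every source of $\FBG(\hat K)$ to vanish. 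That idea is fine, but your stated contradiction leans on the claim that $m$ would also be the \emph{only sink} of $\FBG(\hat K)$, which you never justify (nothing about canonicity constrains sinks). Fortunately the claim is superfluous: once $m$ is the unique source and is itself a sink, any other vertex $v$ (which exists since $p\ge2$) has an in-edge, and tracing in-edges back through the finite acyclic graph reaches the source $m$, giving a nonzero-length path out of the sink $m$ --- already a contradiction. So your proof stands after deleting that clause, but the paper's renormalisation trick is both shorter and avoids needing the canonical solution at all.
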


\begin{proof}
  (i) If the DAE is irreducible, there is only one normalised offset vector by \thref{fineirred}.
  Conversely suppose the DAE is reducible, so the FBG has at least two vertices.
  There is at least one normalised offset vector $(c,d)$, namely the canonical one.
  Consider the corresponding canonical $K=\psi(c)$, and the acyclic subgraph $\FBG(K)$ it defines.
  Being acyclic, this graph has at least one final vertex $k$ with no edges going out of it. Then increasing $K_k$ to $K_k+1$, leaving the rest of $K$ unchanged, gives a new valid $K'\ne K$. For any edge in the FBG leaving $k$, say $k\to l$, cannot be $K$-critical else it would be part of $\FBG(K)$. Thus the value $K_l-K_k-W_{kl}$ is $>0$ and being an integer is at least 1. Hence changing $K$ to $K'$ cannot violate the inequality. In case $K'$ is not normalised, subtracting 1 from all its entries makes it so. This proves the converse.
  
  (ii) Strongly connected means that for any two vertices $k,l$ there is a path in the graph from $k$ to $l$, and one from $l$ to $k$. Summing the inequalities \rf{Kineqs} along such a path shows there is an upper bound $M$ (e.g.~the sum of the absolute values of the weights on all the edges) for $K_l-K_k$ and also $K_k-K_l$, hence for $|K_l-K_k|$, independent of $K$, $k$ and $l$. A normalised $K = (K_1,\ldots,K_p)$ has integer components $K_i\ge0$, one of which is 0, so $0\le K_l\le M$. Hence there are at most $M^p$ of them.
  
  Conversely if the FBG is not strongly connected it breaks into more than one strong component. 
  Hence the vertices $1\:p$ can be partitioned into two nonempty subsets $V_1$, $V_2$ defining subgraphs $G_1$ and $G_2$, such that there is no edge (in the whole FBG) from $G_2$ to $G_1$.
  Then for any integer $a>0$ one can add $a$ to $K_l$ for each $l\in V_2$ to get $K'(a)$---again, normalising by subtracting a constant if necessary. It is easily seen that each resulting $K'(a)$ is a normalised solution and $K'(a)\ne K'(b)$ whenever $a\ne b$, thus giving infinitely many normalised solutions.
\qquad\end{proof}

\newpage
\section{Conclusion}\label{sc:concl}

\subsection{Application of the theory}
Much of the theory presented here is implemented in the current version of our structural analysis tool \daesa. It can read \matlab code for the DAE, or it can input a \sigmx directly. It can display (all or a section of) the original \sigmx, or the result of permuting to coarse or fine block form.
Among the functionality we plan for future releases is:
\begin{enumerate}[--]
  \item A numerical check that the Jacobian is nonsingular at some chosen point. This will let a user check if the \Smethod succeeds in the sense defined in \ssrf{sysj}.
  \item A stage-wise description, on the lines of \daesa's current ``solution scheme'' function, of index reduction of a DAE by the dummy derivatives method---to be described in a separate paper.
  \item Functions to analyse the DAE via its fine-block graph on the lines of \scref{fineblockgraph}.

\end{enumerate}

\subsection{Computational costs}

Permuting a known \spatt to BTF, typically by an implementation of the Dulmage--Mendelsohn decomposition, is quite cheap. Pothen and Fan \cite{Pothen1990} give a complexity analysis and comparison of several methods.

Computing $\Sigma$, hence its \spatt $S$, is straightforward from suitable program code describing the DAE, but see the caveats in \ssrf{caveat}.

To find a Jacobian \spatt one needs to have solved the LAP first to find offsets $c,d$.
Once that is done, computing $S_0(c,d)$ is simple, given $\Sigma$. 

Solving the LAP itself appears the most challenging aspect, especially for large problems. The best current methods have complexity $O(n^3)$ for dense problems and $O(n^2\log n + nm)$ using sparsity, where $m$ is the number of nontrivial entries (finite $\sij$), see \cite{burkard2009assignment}.

It looks desirable to find the essential \spatt $\Sess$ instead, since this, like $S$, is an inherent property of the DAE independent of any offsets. However we do not know of any better method than to find some offsets $c,d$ first and the BTF of $S_0(c,d)$, and then to use \thref{Sess}.

Hence our current approach is a ``bootstrap'' one. We find the coarse BTF based 
on $S$. On each coarse block, we solve the LAP and construct a HVT and ``coarse'' local offsets. From these we can efficiently compute, independently of each other, (a) global offsets for the whole problem and (b) fine blocks within each coarse block and their ``fine'' local offsets.
This gives scope for parallelism on large problems.

The success of this method depends heavily on problem structure, in particular the size distribution of coarse blocks.
In future work we aim to present systematic results about how it performs on large problems.

\bibliographystyle{siam}

\begin{thebibliography}{10}

\bibitem{balakrishnan2013textbook}
{\sc R.~Balakrishnan and K.~Ranganathan}, {\em A textbook of graph theory},
  Springer, 2012.

\bibitem{burkard2009assignment}
{\sc R.~E. Burkard, M.~Dell'Amico, and S.~Martello}, {\em Assignment Problems,
  Revised Reprint}, SIAM, 2009.

\bibitem{campbell1995solvability}
{\sc S.~L. Campbell and E.~Griepentrog}, {\em Solvability of general
  differential algebraic equations}, SIAM Journal on Scientific Computing, 16
  (1995), pp.~257--270.

\bibitem{Coleman1986}
{\sc T.~F. Coleman, A.~Edenbrandt, and J.~R. Gilbert}, {\em Predicting fill for
  sparse orthogonal factorization}, J. ACM, 33 (1986), pp.~517--532.

\bibitem{Duff86a}
{\sc I.~S. Duff, A.~M. Erisman, and J.~K. Reid}, {\em Direct Methods for Sparse
  Matrices}, Oxford Science Publications, Clarendon Press, Oxford, 1986.

\bibitem{griewank2008evaluating}
{\sc A.~Griewank and A.~Walther}, {\em Evaluating Derivatives: Principles and
  Techniques of Algorithmic Differentiation}, SIAM, Philadelphia, PA, USA,
  second~ed., 2008.

\bibitem{Beale1968mathematical}
{\sc B.~L}, {\em Mathematical Programming in Practice}, Topics in Operational
  Research, Sir Isaac Pitman \& Sons Limited, 1968.

\bibitem{Matt93a}
{\sc S.~E. Mattsson and G.~S\"{o}derlind}, {\em Index reduction in
  differential-algebraic equations using dummy derivatives}, SIAM J. Sci.
  Comput., 14 (1993), pp.~677--692.

\bibitem{murty1983linear}
{\sc K.~Murty}, {\em Linear programming}, Wiley, 1983.

\bibitem{NedialkovPryce05a}
{\sc N.~S. Nedialkov and J.~D. Pryce}, {\em Solving differential-algebraic
  equations by {T}aylor series {(I)}: Computing {T}aylor coefficients}, BIT, 45
  (2005), pp.~561--591.

\bibitem{nedialkov2008solving}
{\sc N.~S. Nedialkov and J.~D. Pryce}, {\em Solving differential algebraic
  equations by {T}aylor series ({III}): the {DAETS} code}, JNAIAM J. Numer.
  Anal. Indust. Appl. Math, 3 (2008), pp.~61--80.

\bibitem{NedialkovPryce2012b}
{\sc N.~S. Nedialkov, J.~D. Pryce, and G.~Tan}, {\em {DAESA} --- a {M}atlab
  tool for structural analysis of {DAEs}: Software}, ACM Transactions on
  Mathematical Software, to appear (2014).
\newblock 15 pages.

\bibitem{nedialkov2014qla}
{\sc N.~S. Nedialkov, G.~Tan, and J.~D. Pryce}, {\em Exploiting fine block
  triangularization and quasilinearity in differential-algebraic equation
  systems}.
\newblock Submitted to SIAM J. Sci. Comput., November 2014, 18 pages.

\bibitem{Pant88b}
{\sc C.~C. Pantelides}, {\em The consistent initialization of
  differential-algebraic systems}, SIAM. J. Sci. Stat. Comput., 9 (1988),
  pp.~213--231.

\bibitem{Pothen1990}
{\sc A.~Pothen and C.-J. Fan}, {\em Computing the block triangular form of a
  sparse matrix}, {ACM} Transactions on Mathematical Software, 16 (1990),
  pp.~303--324.

\bibitem{Pryce2001a}
{\sc J.~D. Pryce}, {\em A simple structural analysis method for {DAE}s}, BIT,
  41 (2001), pp.~364--394.

\bibitem{NedialkovPryce2012a}
{\sc J.~D. Pryce, N.~S. Nedialkov, and G.~Tan}, {\em {DAESA} --- a {M}atlab
  tool for structural analysis of {DAEs}: Theory}, ACM Transactions on
  Mathematical Software, to appear (2014).
\newblock 20 pages.

\bibitem{Shaleninov1991a}
{\sc A.~A. Shaleninov}, {\em Removal of topological degeneracy in systems of
  differential evolution equations}, Automation and Remote Control, 51 (1991),
  pp.~1599--1605 (English version).
\newblock Translation from {\em Avtomatika i Telemekhanika}.

\end{thebibliography}

\end{document}